\newtheorem{theorem}{Theorem}[section]
\newtheorem{lemma}[theorem]{Lemma}
\newtheorem{corollary}[theorem]{Corollary}
\newcommand{\BB}{\mathbb{B}}
\newcommand{\HH}{\mathbb{H}}
\newcommand{\NN}{\mathbb{N}}
\newcommand{\ZZ}{\mathbb{Z}}
\newcommand{\RR}{\mathbb{R}}
\newcommand{\CC}{\mathbb{C}}
\newcommand{\lpb}{{\ell^p(\BB)}}
\newcommand{\st}{\; | \;}
\newcommand{\mdl}[1]{{\mathcal #1}}
\newcommand{\cE}{{\mathcal E}}
\title[Oscillation and the mean ergodic theorem]{Oscillation and the mean ergodic theorem\\ for
uniformly convex Banach spaces}
\author{Jeremy Avigad and Jason Rute}
\subjclass[2010]{37A30, 03F60}
\thanks{Avigad's work has been partially supported by NSF grant DMS-1068829 and AFOSR grant FA9550-12-1-0370, and Rute's work has been partially supported by NSF grants DMS-1068829 and DMS-0901020. We are grateful to Laurentiu Leau\c{s}tean for helpful comments and corrections to the proof in Section~\ref{old:main:thm}, to Ulrich Kohlenbach for corrections, and to C{\'e}dric Arhancet for pointing us to the use of UMD spaces in Theorem~\ref{partial:converse:thm}.}
\begin{document}

\begin{abstract}
Let $\BB$ be a $p$-uniformly convex Banach space, with $p \geq 2$. Let $T$ be a linear operator on $\BB$, and let $A_n x$ denote the ergodic average $\frac{1}{n} \sum_{i< n} T^n x$. We prove the following variational inequality in the case where $T$ is power bounded from above and below: for any increasing sequence  $(t_k)_{k \in \NN}$ of natural numbers we have $\sum_k \| A_{t_{k+1}} x - A_{t_k} x \|^p \leq C \| x \|^p$, where the constant $C$ depends only on $p$ and the modulus of uniform convexity. For $T$ a nonexpansive operator, we obtain a weaker bound on the number of $\varepsilon$-fluctuations in the sequence. We clarify the relationship between bounds on the number of $\varepsilon$-fluctuations in a sequence and bounds on the rate of metastability, and provide lower bounds on the rate of metastability that show that our main result is sharp.
\end{abstract}

\maketitle

\section{Introduction}
\label{introduction:section}

A Banach space $\BB$ is said to be \emph{uniformly convex} if for every $\varepsilon \in (0,2]$ there exists a $\delta \in (0,1]$ such that for all $x, y \in \BB$, if $\| x \| \leq 1$, $\| y \| \leq 1$, and $\| x - y \| \geq \varepsilon$, then $\| (x + y)/2 \| \leq 1 - \delta$. Any function $\eta(\varepsilon)$ returning such a $\delta$ for each $\varepsilon$ is called a \emph{modulus of uniform convexity}. 
A Banach space is said to be \emph{$p$-uniformly convex} if, for some $C > 0$, the function $\eta(\varepsilon) = C \varepsilon^p$ is a modulus of uniform convexity.  
Pisier \cite{pisier:75,pisier:11} has shown that every uniformly convex Banach space is isomorphic to a $p$-uniformly convex Banach space for some $p \geq 2$. In particular, for $p \geq 2$ and any measure space $X$, $L^p(X)$ is $p$-uniformly convex. 

Let $\BB$ be a uniformly convex Banach space, and let $T$ be a nonexpansive linear operator, that is, a linear map satisfying $\| T x \| \leq \| x \|$ for every $x \in \BB$. For each $n \geq 1$, let $A_n x$ denote the ergodic average $\frac{1}{n} \sum_{i < n} T^n x$. A version of the mean ergodic theorem due to Garrett Birkhoff \cite{birkhoff:39} implies that the sequence $(A_n x)$ converges. (See Krengel \cite[Chapter 2]{krengel:85} for variations and strengthenings.) In the special case where $\BB$ is a Hilbert space, Jones, Ostrovskii, and Rosenblatt \cite{jones:et:al:96} prove the following ``square function'' variational inequality:
\begin{theorem}
\label{jro:thm}
Let $\HH$ be a Hilbert space and let $T$ be any nonexpansive linear operator. Then for any $x$ in $\HH$ and any increasing sequence $(t_k)_{k \in \NN}$, 
\[
 \sum_k \|A_{t_{k+1}} x - A_{t_k} x\|^2 \leq 625 \| x \|^2.
\]
\end{theorem}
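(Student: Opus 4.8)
The plan is to reduce the variational inequality, by dilation together with the spectral theorem, to a purely scalar estimate on the multipliers $\phi_n(\lambda) = \frac{1}{n}\sum_{j<n}\lambda^j$ attached to the Ces\`aro averages, and then to prove that scalar estimate by separating the two scales on which $(\phi_n(\lambda))_n$ behaves differently.

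\emph{Reduction to a unitary operator and to a scalar bound.} First I would invoke the Sz.-Nagy dilation theorem: there is a Hilbert space $\mathcal{K}\supseteq\HH$ and a unitary $U$ on $\mathcal{K}$ with $T^n = P_{\HH}U^n|_{\HH}$ for all $n\ge 0$, where $P_{\HH}$ is the orthogonal projection onto $\HH$. Then $A^T_n x = P_{\HH}A^U_n x$ for $x\in\HH$, so $\|A^T_{t_{k+1}}x - A^T_{t_k}x\|\le\|A^U_{t_{k+1}}x - A^U_{t_k}x\|$, and the left-hand side of the theorem is bounded by the corresponding sum for $U$ with no loss in the constant; thus it suffices to treat $U$ unitary. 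For $U = \int_{\TT}\lambda\,dE(\lambda)$, set $\mu_x = \langle E(\cdot)x,x\rangle$, a positive measure of total mass $\|x\|^2$; since $A^U_n x = \int\phi_n(\lambda)\,dE(\lambda)x$ with $\phi_n(\lambda) = \frac{1}{n}\frac{\lambda^n-1}{\lambda-1}$ (and $\phi_n(1)=1$), we get
\[
 \sum_k\|A^U_{t_{k+1}}x - A^U_{t_k}x\|^2 = \int_{\TT}\Big(\sum_k|\phi_{t_{k+1}}(\lambda) - \phi_{t_k}(\lambda)|^2\Big)\,d\mu_x(\lambda).
\]
So the theorem follows once I establish the scalar bound $\sup_{\lambda\in\TT}\ \sup_{(t_k)}\ \sum_k|\phi_{t_{k+1}}(\lambda) - \phi_{t_k}(\lambda)|^2 \le 625$, i.e. a uniform bound on the (un-square-rooted) $2$-variation of the sequences $(\phi_n(\lambda))_n$.

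\emph{The scalar bound.} The case $\lambda = 1$ is trivial. For $\lambda\ne 1$ put $L = 1/|\lambda-1|$; then $\phi_n(\lambda)$ varies slowly for $n\lesssim L$ and decays for $n\gtrsim L$. For the slow regime I would use the identity $\phi_{i+1}(\lambda) - \phi_i(\lambda) = \frac{\lambda-1}{i(i+1)}\sum_{m=0}^{i-1}(m+1)\lambda^m$, which yields $|\phi_{i+1}(\lambda)-\phi_i(\lambda)|\le|\lambda-1|/2$; hence the ordinary total variation of $(\phi_n(\lambda))$ over $1\le n\le\lceil L\rceil$ is $\le\lceil L\rceil|\lambda-1|/2\le 3/2$, and since the $2$-variation of any sequence is at most the square of its total variation, the head contributes $\le 9/4$. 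For $n>L$ the total variation is in fact infinite (the trajectory spirals), so here I would instead invoke the standard reduction of the $2$-variation of a sequence to a dyadic \emph{jump} square function plus a dyadic \emph{oscillation} square function over the blocks $[2^j,2^{j+1})$. On such a block with $2^j>L$ one has $|\phi_n(\lambda)|\le 2/(n|\lambda-1|) = 2L/n\le 2L/2^j$, so the oscillation there is $\le 4L/2^j$, and $\sum_{2^j>L}(4L/2^j)^2$ is a geometric series bounded by $64/3$, uniformly in $\lambda$. For the dyadic jumps, the identity $\phi_{2n}(\lambda) - \phi_n(\lambda) = \tfrac12(\lambda^n-1)\phi_n(\lambda)$ gives $|\phi_{2^{j+1}}(\lambda)-\phi_{2^j}(\lambda)|^2 = |\lambda^{2^j}-1|^4/(4^{j+1}|\lambda-1|^2)$, and bounding $|\lambda^{2^j}-1|\le\min(2,\,2^j|\lambda-1|)$ and splitting the sum at $2^j\approx L$ bounds $\sum_j|\phi_{2^{j+1}}(\lambda)-\phi_{2^j}(\lambda)|^2$ by $5/3$, uniformly in $\lambda$. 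Assembling the three pieces (slowly varying head, dyadic jumps, tail oscillations) together with the absolute constant from the $2$-variation reduction gives the scalar bound, with $625$ comfortably sufficient.

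\emph{Main obstacle.} The hard part is the tail $n\gg L$. The two obvious estimates both fail there: bounding $|\phi_{t_{k+1}}(\lambda)-\phi_{t_k}(\lambda)|$ by $\sum_i|\phi_{i+1}(\lambda)-\phi_i(\lambda)|$ is useless because the total variation of $(\phi_n(\lambda))_n$ is genuinely infinite, while bounding it by $|\phi_{t_{k+1}}(\lambda)|+|\phi_{t_k}(\lambda)|$ is useless because consecutive indices $t_k,t_{k+1}$ may be close together. What rescues the estimate is the cancellation in the spiral: over a whole dyadic block the oscillation is controlled by the small quantity $\sup_n|\phi_n(\lambda)|$ on that block, and the squares of these bounds sum to a convergent geometric series independent of $\lambda$. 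Packaging this correctly — and, secondarily, verifying that the constants coming from the dilation, the spectral integration, the $2$-variation reduction, and the three pieces combine to at most $625$ — is where the real work lies.
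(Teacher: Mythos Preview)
Your approach---Sz.-Nagy dilation to a unitary, then the spectral theorem to reduce everything to a uniform bound on $\sup_{(t_k)}\sum_k|\phi_{t_{k+1}}(\lambda)-\phi_{t_k}(\lambda)|^2$---is precisely the Jones--Ostrovskii--Rosenblatt argument that the paper \emph{cites} for Theorem~\ref{jro:thm}; the paper does not give its own proof of this statement. The paper's own machinery (developed for the more general Theorem~\ref{new:main:thm}) takes a completely different route: it embeds the problem in $\lpb$ via the orbit map $i\mapsto T^i x$, compares the shift averages $A_n f$ to dyadic conditional expectations $\cE_k f$ (Lemmas~\ref{lemma:a} and~\ref{lemma:b}), controls the latter by Pisier's martingale cotype inequality (Corollary~\ref{pisier:cor}), and transfers back to $\BB$. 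Applied to a Hilbert space with an isometry this recovers Theorem~\ref{jro:thm} up to the constant, and dilation then handles the nonexpansive case. Your spectral route is shorter and delivers the explicit $625$; the paper's route loses track of the constant but works in any $p$-uniformly convex space, where no spectral calculus is available.

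There is, however, a real gap in your tail estimate. The ``standard reduction'' you invoke---bounding the square variation of an arbitrary sequence $(a_n)$ by $\sum_j|a_{2^{j+1}}-a_{2^j}|^2$ plus $\sum_j\bigl(\sup_{m,n\in[2^j,2^{j+1}]}|a_m-a_n|\bigr)^2$---is false in general: take $a_n=(-1)^n/j$ for $n\in[2^j,2^{j+1})$, where both sums on the right are finite but the $2$-variation is infinite, because arbitrarily many short increments inside a single block can together contribute far more than one oscillation-squared. The bound you want \emph{is} true for the specific multipliers $\phi_n(\lambda)$, but establishing it requires the cancellation built into the averages---for instance via the identity $\phi_m-\phi_n=\frac{m-n}{m}\bigl(\lambda^n\phi_{m-n}-\phi_n\bigr)$, or by an explicit in-block estimate on $\sum_{i\in S_k}|\phi_{t_{i+1}}-\phi_{t_i}|^2$ in the spirit of Lemma~\ref{lemma:b}---not merely the size bound $|\phi_n|\le 2L/n$. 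As written, the step ``oscillation $\le 4L/2^j$, hence $\sum_{2^j>L}(4L/2^j)^2$ controls the short increments'' does not go through.
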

If $(a_n)$ is any finite or infinite sequence of elements of $\BB$ and $\varepsilon > 0$, we will say that $(a_n)$ \emph{admits $k$ $\varepsilon$-fluctuations} if there are $i_1 \leq j_1 \leq \ldots \leq i_k \leq j_k$ such that, for each $u=1,\ldots,k$, $\| a_{j_u} - a_{i_u}\| \geq \varepsilon$. Theorem~\ref{jro:thm} implies that for every $\varepsilon > 0$, the sequence of ergodic averages $(A_n x)$ admits at most $625 (\| x \| / \varepsilon)^2$-many $\varepsilon$-fluctuations, and hence the sequence converges.

Theorem~\ref{jro:thm} can therefore be viewed as a strong quantitative version of the mean ergodic theorem for a Hilbert space. The methods of Jones, Ostrovskii, and Rosenblatt \cite{jones:et:al:96}, based on Fourier analysis, form the starting point for a number of pointwise variational results for sequences of ergodic averages in the spaces $L^p(X)$ by Jones, Kaufman, Rosenblatt, and Wierdl \cite{jones:et:al:98}. These pointwise results imply the following variational inequality regarding convergence of ergodic averages in the $L^p$ norm (see \cite[Theorem B]{jones:et:al:98}):
\begin{theorem}
\label{thm:jro:mean}
Let $p \geq 2$ and let $T$ be the isometry on $L^p(X)$ arising from a measure-preserving tranformation of the finite measure space $X$. Then for any $f$ in $L^p(X)$ and any increasing sequence $(t_k)_{k \in \NN}$,
\[
 \sum_k \|A_{t_{k+1}} f - A_{t_k} f\|_p^p \leq C \| f \|^p_p 
\]
for some constant $C$ that depends only on $p$.
\end{theorem}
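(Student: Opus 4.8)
The plan is to deduce this $L^p$-norm variational inequality from the pointwise oscillation estimates of Jones, Kaufman, Rosenblatt, and Wierdl. First I would apply Tonelli's theorem to rewrite the left-hand side as a single integral,
\[
 \sum_k \|A_{t_{k+1}} f - A_{t_k} f\|_p^p = \int_X \sum_k |A_{t_{k+1}} f(x) - A_{t_k} f(x)|^p \, d\mu(x),
\]
so that it suffices to bound the pointwise quantity $\sum_k |A_{t_{k+1}} f(x) - A_{t_k} f(x)|^p$ in $L^1(X)$. Since $p \geq 2$, the elementary embedding $\ell^2 \hookrightarrow \ell^p$ (i.e.\ $\|c\|_{\ell^p} \leq \|c\|_{\ell^2}$) gives, for every $x \in X$,
\[
 \sum_k |A_{t_{k+1}} f(x) - A_{t_k} f(x)|^p \leq \Bigl( \sum_k |A_{t_{k+1}} f(x) - A_{t_k} f(x)|^2 \Bigr)^{p/2} =: \bigl( S f(x) \bigr)^p ,
\]
where $Sf$ is the $\ell^2$-valued square function attached to the fixed sequence $(t_k)$. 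Thus the theorem reduces to the $L^p$ bound $\|Sf\|_p \leq C(p)^{1/p}\|f\|_p$.

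The next step is to dominate $Sf$ pointwise by an oscillation operator of the type controlled in \cite{jones:et:al:98}. Since
\[
 |A_{t_{k+1}} f(x) - A_{t_k} f(x)| \leq \sup_{t_k \leq n \leq t_{k+1}} |A_n f(x) - A_{t_k} f(x)| ,
\]
the square function $Sf$ is dominated (up to a routine adjustment for the right endpoint, which contributes a second square function of the same form built from consecutive differences) by the oscillation operator
\[
 \mathcal{O} f(x) = \Bigl( \sum_k \sup_{t_k \leq n < t_{k+1}} |A_n f(x) - A_{t_k} f(x)|^2 \Bigr)^{1/2}.
\]
The crux of the argument is then the inequality $\|\mathcal{O} f\|_p \leq C_p \|f\|_p$ for $1 < p < \infty$, with $C_p$ independent of the choice of $(t_k)$, which is the main pointwise result of Jones, Kaufman, Rosenblatt, and Wierdl. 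Combining this with the two displays above yields $\|Sf\|_p \leq C_p' \|f\|_p$ and hence $\sum_k \|A_{t_{k+1}} f - A_{t_k} f\|_p^p \leq C(p)\|f\|_p^p$.

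The main obstacle is, unsurprisingly, the oscillation inequality itself for $p \neq 2$; everything else is bookkeeping. For $p = 2$ it is essentially immediate: $L^2(X)$ is a Hilbert space, the isometry $T$ is in particular nonexpansive on it, and Theorem~\ref{jro:thm} gives $\sum_k \|A_{t_{k+1}} f - A_{t_k} f\|_2^2 \leq 625 \|f\|_2^2$ directly. For general $p > 2$ one cannot argue so cheaply: the standard route is to pass, via the Calderón transference principle, to ergodic averages of the shift on $\mathbb{Z}$ (or $\mathbb{R}$), establish there that $\mathcal{O}$ is bounded on $L^2$ and of weak type $(1,1)$ using Littlewood--Paley and square-function techniques adapted to the sequence $(t_k)$, and then interpolate and dualize to obtain the full range $1 < p < \infty$. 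Granting that input, the proof is complete with $C = C(p)$ depending only on $p$.
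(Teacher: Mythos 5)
Your reduction to the pointwise $\ell^2$-oscillation operator of Jones, Kaufman, Rosenblatt, and Wierdl \cite{jones:et:al:98} via Tonelli and the embedding $\ell^2 \hookrightarrow \ell^p$ for $p \geq 2$ is exactly the route the paper indicates: the paper gives no proof of its own, citing \cite[Theorem B]{jones:et:al:98} and attributing the theorem to ``their pointwise results.'' Your argument is correct, modulo the harmless right-endpoint bookkeeping in the definition of $\mathcal{O}$ that you already flag.
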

\noindent In fact, their pointwise results are considerably stronger; see the discussion at the end of Section~\ref{lemmas:section}.

Our main result generalizes Theorem~\ref{thm:jro:mean} to an arbitrary uniformly convex Banach space. It applies not only when $T$ is an isometry, but more generally when $T$ is power bounded from above and below, as in the hypothesis of the next theorem.
\begin{theorem}
\label{new:main:thm}
Let $p \geq 2$ and let $\BB$ be any $p$-uniformly convex Banach space. Let $T$ be a linear operator on $\BB$ satisfying $B_1 \| y \| \leq \| T^n y \| \leq B_2 \| y \|$ for every $n$ and $y \in \BB$, for some $B_1, B_2 > 0$. Then for any $x$ in $\BB$ and any increasing sequence $(t_k)_{k \in \NN}$,
\[
 \sum_k \|A_{t_{k+1}} x - A_{t_k} x\|^p \leq C \| x \|^p
\]
for some constant $C$. If $\eta(\varepsilon)= K \varepsilon^p$ is a modulus of uniform convexity for $\BB$, the constant $C$ depends only on $B_1$, $B_2$, $K$, and $p$.
\end{theorem}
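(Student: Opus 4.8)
The plan is to reduce the general case to a "square function" estimate for a suitable martingale-type decomposition of the ergodic averages, mimicking the Hilbert space argument of Jones, Ostrovskii, and Rosenblatt but replacing orthogonality with the geometric inequality furnished by $p$-uniform convexity. First I would observe that the hypothesis $B_1 \| y \| \leq \| T^n y \| \leq B_2 \| y \|$ lets us renorm $\BB$ by $\| y \|' = \sup_n \| T^n y \|$; under this equivalent norm $T$ becomes nonexpansive (indeed an isometry onto its range if we also take a sup over $T^{-n}$ on the closure of the range), the modulus of uniform convexity changes only by constants depending on $B_1, B_2$, and the claimed inequality in the new norm implies the old one up to a factor of $(B_2/B_1)^p$. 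So it suffices to prove the estimate for $T$ nonexpansive, or better, for $T$ invertible with $T, T^{-1}$ both nonexpansive, after decomposing $\BB$ via the mean ergodic theorem into the fixed space of $T$ (on which all $A_n x$ agree and the sum is zero) and the closure of the range of $I - T$.

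The key step is a dyadic "telescoping" of $A_{t_{k+1}} x - A_{t_k} x$. Following the classical approach, one writes the difference of two ergodic averages along the sequence $(t_k)$ as a sum of a "long jumps" part, controlled by comparing $A_n$ at dyadic scales $n = 2^j$, and a "short jumps" part, where $t_k, t_{k+1}$ lie between consecutive dyadic scales; the short-jumps part is handled by a maximal/oscillation inequality at a fixed dyadic block. For the long-jumps part, the crucial input is that in a $p$-uniformly convex space, whenever $(y_j)$ is a sequence for which successive partial sums do not grow — concretely, $\| \sum_{j \leq m} y_j \|$ stays bounded for all $m$ — one gets $\sum_j \| y_j \|^p \leq C \sup_m \| \sum_{j \le m} y_j \|^p$, provided the $y_j$ are "co-monotone" in the sense that $\| z + y_j \| \ge \| z \|$ for relevant $z$; this is exactly the Banach-space surrogate for Bessel's inequality and should follow from iterating the uniform convexity estimate $\| (u+v)/2 \|^p \le (\| u\|^p + \| v \|^p)/2 - K \| u - v\|^p$ (valid, up to constants, in $p$-uniformly convex spaces — this is where Pisier's characterization and the hypothesis on $\eta$ enter). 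I would apply this with $y_j = A_{2^{j+1}} x - A_{2^j} x$, using the algebraic identity expressing $A_{2^{j+1}} x$ as an average of $A_{2^j}$ applied to shifted orbits, together with nonexpansiveness of $T$, to verify the required monotonicity and boundedness $\| A_{2^j} x \| \le \| x\|$.

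The main obstacle, and the part requiring the most care, is precisely this passage from orthogonality to the convexity inequality: in a Hilbert space the cross terms vanish exactly, whereas here one only has an inequality, and it must be applied inductively along the dyadic blocks in a way that the accumulated error does not destroy the bound. Getting the "co-monotonicity" hypothesis to hold genuinely — that adding the next increment $y_j$ does not decrease the norm of the running sum — is delicate because $A_{2^{j+1}} x - A_{2^j} x$ is not literally orthogonal to $A_{2^j} x$; I expect one must instead work with a slightly different decomposition (e.g. writing $A_{2^{j+1}} x$ in terms of $A_{2^j}$ acting on $x, T^{2^j} x, \dots$ and peeling off one average at a time, as in a reverse martingale), and verify a two-sided convexity estimate $\| (u+v)/2 \|^p \le \tfrac12 \| u \|^p + \tfrac12 \| v\|^p - K' \| u-v\|^p$ that holds in any $p$-uniformly convex space with $K'$ depending on $K$ and $p$ — a standard but nontrivial consequence of the definition. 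The short-jumps term, by contrast, should be routine: between scales $2^j$ and $2^{j+1}$ there are at most $2^j$ values of $n$, and a crude bound $\| A_{t_{k+1}} x - A_{t_k} x \| \le \| A_{t_{k+1}} x - A_{2^{j+1}} x\| + \| A_{2^{j+1}} x - A_{2^j} x \| + \cdots$ combined with an $\ell^p$-valued maximal inequality over each block suffices, the block maximal inequality itself being provable by the same convexity trick applied at a single scale. Summing the long- and short-jumps contributions and tracking the dependence of every constant on $B_1, B_2, K, p$ yields the stated $C$.
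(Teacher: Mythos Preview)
There are two genuine gaps. First, the renorming step is unsound: uniform convexity, and in particular $p$-uniform convexity with a given modulus, is \emph{not} preserved under passing to an equivalent norm (think of $\RR^2$ with the $\ell^\infty$ norm, which is equivalent to the Euclidean norm but not even strictly convex). There is no reason the norm $\|y\|' = \sup_n \|T^n y\|$ should satisfy a parallelogram-type inequality $\|(u+v)/2\|'^p \le \tfrac12\|u\|'^p + \tfrac12\|v\|'^p - K'\|u-v\|'^p$ with $K'$ controlled by $K,B_1,B_2,p$; a supremum of $p$-uniformly convex norms need not be $p$-uniformly convex. Since every subsequent step of your argument applies that inequality in the norm for which $T$ is nonexpansive, the reduction to the isometric case collapses. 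The paper does \emph{not} renorm: the lower bound $B_1$ is used only at the very end, in a transfer argument, to pass from an inequality for the shift on $\ell^p(\ZZ;\BB)$ back to $\BB$.

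Second, even granting an isometry $T$ on a genuinely $p$-uniformly convex space, your convexity telescoping only yields $\sum_j \|A_{2^{j+1}}x - A_{2^j}x\|^p \le C\|x\|^p$ along the dyadic subsequence (that part is correct, via $A_{2^{j+1}}x = \tfrac12(A_{2^j}x + T^{2^j}A_{2^j}x)$ and the convexity inequality). For a general increasing $(t_k)$ you still need (a) to compare each $A_{t_k}$ with the nearest dyadic average and (b) to control the short jumps within a single block $[2^{j-1},2^j]$, where there may be arbitrarily many $t_k$'s. Neither is ``routine'': there is no filtration on $\BB$ itself, so there is no martingale or reverse-martingale structure to invoke, and the ``co-monotonicity'' you need simply fails for the differences $A_{t_{k+1}}x - A_{t_k}x$ in general. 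The paper's solution is to change the ambient space: it embeds a long finite orbit $(T^i x)_{0\le i<N}$ as an element $f$ of $\ell^p(\ZZ;\BB)$, where the shift \emph{does} admit a dyadic martingale structure via conditional expectations $\cE_k$. Pisier's martingale characterization of $p$-uniform convexity (Theorem~\ref{pisier:thm}, Corollary~\ref{pisier:cor}) then replaces orthogonality, and Lemmas~\ref{lemma:a} and~\ref{lemma:b} compare $A_{t_k}f$ with $\cE_k f$ to handle exactly the two pieces (a) and (b). The result is then transferred back to $\BB$ using both $B_1$ and $B_2$; this transfer, not a renorming, is where the two-sided power bound enters.
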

The difficulty in proving Theorem~\ref{new:main:thm} is that the Fourier-analytic methods used by Jones, Ostrovskii, and Rosenblatt \cite{jones:et:al:96} are not available in this general setting. But Jones, Rosenblatt, and Wierdl \cite{jones:et:al:03} later developed methods of proving pointwise variational results for $L^p$ spaces that bypass the need for Fourier analysis. Our strategy is to adapt their methods to \emph{Banach-valued} $L^p$ spaces $L^p(X {;\,} \BB)$, and then transfer the result back to the original Banach space, $\BB$. As a side effect, we also obtain pointwise results for the spaces $L^p(X {;\,} \BB)$.

The proof of Theorem~\ref{jro:thm} in \cite{jones:et:al:96} first establishes the result in the case that $T$ is an isometry, and then invokes Sz.-Nagy's dilation theorem \cite{nagy:et:al:10}, which says that any Hilbert space with a nonexpansive map can be embedded into a larger Hilbert space with an isometry that projects to the original nonexpansive map. This strategy is not available for arbitrary uniformly convex Banach spaces, and the assumption that $T$ is power bounded from below seems to be essential to our proof of Theorem~\ref{new:main:thm}. Theorem~\ref{new:main:thm} implies that for every $\varepsilon > 0$, the number of $\varepsilon$-fluctuations in a sequence $(A_n x)$ in a $p$-uniformly convex Banach space is $O(\rho^p)$, where $\rho = \| x \| / \varepsilon$. In the case where $T$ is a nonexpansive map, we obtain the following weaker bound.
\begin{theorem}
\label{old:main:thm}
Let $p \geq 2$ and let $\BB$ be any $p$-uniformly convex Banach space, and let $T$ be a nonexpansive linear operator on $\BB$. Then for any $x$ in $\BB$ and any $\varepsilon > 0$, there are at most $C \rho^{p + 1} \log \rho$-many $\varepsilon$ fluctuations in $(A_n x)$, for a constant $C$ that depends only on $p$ and $K$, where 
$\eta(\varepsilon)= K \varepsilon^p$ is a modulus of uniform convexity for $\BB$.
\end{theorem}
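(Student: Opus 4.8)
The plan is to normalize $\|x\| = 1$ by homogeneity, so that $\rho = 1/\varepsilon$, and to organize the averages along the dyadic scales $n \in I_k := [2^k, 2^{k+1})$. A preliminary combinatorial step — essentially the splitting of ``long'' and ``short'' variation used by Jones, Rosenblatt, and Wierdl — reduces the number of $\varepsilon$-fluctuations of $(A_n x)$ to the number of $\varepsilon/3$-fluctuations of the \emph{sampled} sequence $b_k := A_{2^k} x$, plus the sum, over those blocks $I_k$ that host some internal $\varepsilon/3$-fluctuation (the ``active'' blocks), of the number of internal fluctuations of that block. For this second contribution I would use the identity
\[
 A_n x - A_m x = \frac{n-m}{n}\bigl(A_{n-m}(T^m x) - A_m x\bigr) \qquad (m < n),
\]
which together with nonexpansiveness gives $\|A_n x - A_m x\| \le 2(n-m)/2^k$ whenever $m < n$ both lie in $I_k$; thus an $\varepsilon$-fluctuation inside $I_k$ requires a gap of length at least $\tfrac12\varepsilon 2^k$, and each block hosts $O(\rho)$ of them.

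The heart of the matter is the sampled sequence and the count of active blocks. Since $T$ commutes with every averaging operator, $b_{k+1} = \tfrac12\bigl(b_k + T^{2^k} b_k\bigr)$, and since $T$ is nonexpansive, $\|T^{2^k} b_k\| \le \|b_k\|$, so $(\|b_k\|)_k$ is non-increasing; applying $p$-uniform convexity to this midpoint, and using $\|b_k - T^{2^k} b_k\| = 2\|b_{k+1} - b_k\|$, one obtains
\[
 \|b_k\| - \|b_{k+1}\| \; \ge \; K\,2^p\,\|b_{k+1} - b_k\|^p\,\|b_k\|^{1-p},
\]
and, after telescoping and using $\|b_k\| \le 1$, a bound $\sum_k \|b_{k+1} - b_k\|^p \le (K 2^p)^{-1}$ on the consecutive geometric-scale variation. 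Moreover $b_{k'}$ is itself the $2^{k'-k}$-th ergodic average of $b_k$ under the nonexpansive operator $T^{2^k}$, so the sampled sequence inherits the same ergodic-average structure one scale up. I would exploit this self-similarity: partition the indices $k$ with $\|b_k\| \ge \varepsilon/4$ into the $O(\log \rho)$ intervals on which $\|b_k\|$ stays within a factor of $2$ (these are intervals because $\|b_k\|$ is monotone), and estimate each such ``octave'' separately. On an octave the near-constancy of $\|b_k\|$ plays the role of the lower bound that is missing globally, and a count within the octave based on the inequality above should bound both its contribution to the sampled fluctuations and its number of active blocks by $O(\rho^p)$; summing over the $O(\log\rho)$ octaves gives $O(\rho^p \log \rho)$ for each, and combining with the $O(\rho)$ per-block estimate yields the asserted $O(\rho^{p+1} \log \rho)$.

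I expect the main obstacle to be exactly this conversion of geometric-scale (consecutive) control into a genuine bound on the number of $\varepsilon$-fluctuations of $(b_k)$ and on the number of active blocks. Unlike in Theorem~\ref{new:main:thm}, where the two-sided power bound makes a single application of uniform convexity self-improving and controls the full variational sum at once, here the estimate only measures the decay of $\|b_k\|$ against the ambient norm $\|x\|$ rather than against its current value, and neither monotone decay nor bounded consecutive $\ell^p$-variation alone limits the fluctuation count. The octave decomposition is what repairs this, and it is precisely this detour — forced by the absence of a lower power bound — that costs the extra factor $\rho \log \rho$ relative to the sharp $O(\rho^p)$ of the power-bounded case. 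The remaining bookkeeping, ensuring that fluctuations straddling two octaves or two dyadic blocks are not overcounted, is routine and affects only the value of the constant $C$.
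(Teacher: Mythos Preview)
Your preliminary reductions are sound: the within-block estimate via $A_n x - A_m x = \frac{n-m}{n}(A_{n-m}(T^m x) - A_m x)$ and the resulting $O(\rho)$ count per dyadic block are exactly Lemma~\ref{lemma:two} with $\alpha = 2$, and the midpoint inequality $\|b_k\| - \|b_{k+1}\| \ge K 2^p \|b_{k+1} - b_k\|^p\|b_k\|^{1-p}$ is correct. But the step you yourself flag is a genuine gap, and the octave decomposition does not close it. Near-constancy of $\|b_k\|$ on an octave is not a substitute for the two-sided power bound of Theorem~\ref{new:main:thm}: that hypothesis concerns $\|T^n y\|$ for \emph{all} $y$, whereas you only control the norms of the particular vectors $b_k$. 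What your telescoped inequality actually gives for $i<j$, via H\"older, is $\|b_i\| - \|b_j\| \ge c (j-i)^{-(p-1)} \|b_j - b_i\|^p$, and the $(j-i)^{-(p-1)}$ loss is real. Concretely, the single-step constraint $\|2b_{k+1} - b_k\| \le \|b_k\|$ permits an angular drift of order $\sqrt{\alpha}$ per step once $\|b_k\|$ is allowed to decay at rate $\alpha$; over the $\sim 1/\alpha$ steps of one octave the sequence can then accumulate $\sim 1/\sqrt{\alpha}$ full turns and hence unboundedly many $\varepsilon$-fluctuations. Whether such a drift is realizable by the powers of a single nonexpansive $T$ is a separate question --- but nothing in your argument (consecutive $\ell^p$-variation plus norm monotonicity plus the octave condition) rules it out, and you invoke the self-similarity only as motivation, not as an estimate. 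The same obstruction blocks your count of active blocks: an internal $\varepsilon/3$-fluctuation in $I_k$ forces no definite drop $\|b_k\| - \|b_{k+1}\|$.

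The paper proceeds differently. Instead of controlling consecutive dyadic differences, it invokes a lemma of Kohlenbach--Leu\c{s}tean going back to Birkhoff (Lemma~\ref{lemma:one}): with $M = \lceil 16\rho \rceil$ and $\gamma = \tfrac{\varepsilon}{8}\tilde\eta(\varepsilon/8\|x\|)$, $\tilde\eta(t) = Kt^{p-1}$, if $\|A_m x\| \ge \|A_N x\| - \gamma$ for \emph{all} $m \le u$ then $(A_n x)$ has \emph{no} $\varepsilon$-fluctuations in $[MN, u/2]$. The point is that near-minimality of a single $\|A_N x\|$ forces all later averages to cluster; the proof writes $A_i x$ for $i \gg N$ as an approximate average of translates of $A_N x$ and applies uniform convexity to that multi-term average, not merely to a midpoint. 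With this in hand, one introduces stopping times $N_0 = 1 < N_1 < \cdots$ at which $\|A_{N_i} x\|$ first drops by $\gamma$; since $\gamma \sim \|x\|\rho^{-p}$ there are $O(\rho^p)$ of them, and Lemma~\ref{lemma:one} confines all fluctuations to the transitional windows $[N_i/2, MN_i]$ of multiplicative length $2M = O(\rho)$, where Lemma~\ref{lemma:two} gives $O(\rho\log\rho)$ fluctuations each. The product is $O(\rho^{p+1}\log\rho)$. So your overall architecture --- about $\rho^p$ bad regions times about $\rho$ per region, with a logarithm --- is in the right spirit, but the mechanism that confines the fluctuations to finitely many regions is Lemma~\ref{lemma:one}, not the consecutive-variation bound on the sampled sequence.
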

\noindent Here we use an entirely different argument, drawing on calculations by Kohlenbach and Leu\c{s}tean \cite{kohlenbach:leustean:09}, which, in turn, draw on those in Birkhoff's proof \cite{birkhoff:39}.

The outline of this paper is as follows. Theorem~\ref{new:main:thm} is proved in Section~\ref{new:main:section}, modulo two central lemmas adapted from \cite{jones:et:al:03}, which are proved in Section~\ref{lemmas:section}. In Section~\ref{old:main:section}, we prove Theorem~\ref{old:main:thm}. In Section~\ref{quantitative:section}, we consider various quantitative data that can be associated to a convergence theorem, and clarify the relationship between the results here and results having to do with metastability \cite{avigad:et:al:10,kohlenbach:leustean:09,tao:08,kohlenbach:unp:e,kohlenbach:leustean:unp,kohlenbach:schade:12,walsh:12}. In Section~\ref{lower:bounds:section}, we provide lower bounds on the rate of metastability that show that the result of Theorem~\ref{new:main:section} is sharp.

\section{The variational inequality}
\label{new:main:section}

Jones, Rosenblatt, and Weirdl \cite{jones:et:al:03} prove a variety of variational inequalities for sequences of generalized ergodic averages in the classical $L^p$ spaces. Their starting point is to prove related inequalities for the space $\ell^2$, consisting of functions $f : \ZZ \to \RR$ with norm $\| f \|_{\ell^2} = \left( \sum_x f^2(x) \right)^{1/2}$, and then to transfer these results to the $L^p$ spaces using the Calder\'on transfer principle, as well as Calder\'on-Zygmund decomposition.

If $X$ is any measure space and $\BB$ is any Banach space, the classical spaces $L^p(X)$ can be generalized to spaces $L^p(X {;\,} \BB)$, consisting of measurable functions $f:X \to \BB$ for which the norm
\[
 \| f \|_{L^p(X {;\,} \BB)} = \left(\int_X \|f\|_\BB^p\right)^{1/p}
\] is finite. Precise definitions can be found in Pisier~\cite{pisier:75,pisier:11}. When $X$ is the set of integers with counting measure, $L^p(X {;\,} \BB)$ is the space that we will denote $\lpb$. 

We will prove Theorem~\ref{new:main:thm} in two steps:
\begin{enumerate}
 \item Generalize the $\ell^2$ results of \cite{jones:et:al:03} to $\lpb$, where $\BB$ is a $p$-uniformly convex Banach space.
 \item Use a novel transfer argument to transfer the result back to $\BB$, rather than to $L^p(X {;\,} \BB)$.  
\end{enumerate}
The key insight is that one can use martingale inequalities for $\lpb$ in the first step in place of orthogonal decomposition in $\ell^2$. Beyond that, the changes that need to be made to the arguments of \cite{jones:et:al:03} are minor, such as replacing appeals to Cauchy's inequality by appeals to H\"older's inequality. For completeness, we will spell out these modifications explicitly in the next section, for the particular case needed for the proof of Theorem~\ref{new:main:thm}. The rest of this section is devoted to the proof of Theorem~\ref{new:main:thm}, modulo two lemmas proved in Section~\ref{lemmas:section}.

Let $A_0 \subseteq A_1 \subseteq A_2 \subseteq \ldots$ be a filtration of $X$, that is, a sequence of $\sigma$-subalgebras of $X$. A sequence $(M_n)_{n \geq 0}$ of elements of $L^p(X {;\,} \BB)$ is said to be a \emph{martingale} for this filtration if each $M_n$ is $A_n$-measurable, and for any $n \leq k$, $M_n = \mathbb{E}(M_k | A_n)$. We will use the forward direction of the following elegant equivalence, due to Pisier~\cite{pisier:75,pisier:11}:
\begin{theorem}
\label{pisier:thm}
Let $p$ satisfy $2 \leq p < \infty$, and let $\BB$ be a Banach space. The following are equivalent:
\begin{enumerate}
 \item $\BB$ is isomorphic to a $p$-uniformly convex Banach space.
 \item There is a constant, $C$, such that if $(M_n)_{n \geq 0}$ is any martingale in $L^p(X {;\,} \BB)$, then
\[
\| M_0 \|^p_{L^p(X {;\,} \BB)} + \sum_{n \geq 0} \| M_{n+1} - M_n \|^p_{L^p(X {;\,} \BB)} \leq C \sup_{n \geq 0} 
\| M_n \|^p_{L^p(X {;\,} \BB)}.
\]
\end{enumerate}
If $\BB$ has modulus of uniform convexity $\eta(\varepsilon) = K \varepsilon^p$, the constant $C$ in (2) depends only on $p$ and $K$. 
\end{theorem}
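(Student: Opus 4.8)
The plan is to prove the two implications separately, giving the forward direction $(1)\Rightarrow(2)$ — the only one we use below — in detail and reducing the converse to Pisier's renorming theorem \cite{pisier:75,pisier:11}. Since an isomorphism between $\BB$ and a $p$-uniformly convex space merely replaces the norm by an equivalent one, worsening every constant by a fixed factor, I would first reduce $(1)\Rightarrow(2)$ to the case where $\BB$ itself is $p$-uniformly convex with modulus $\eta(\varepsilon)=K\varepsilon^p$. The geometric heart of the matter is then Pisier's equivalence of this hypothesis with a convexity property of the function $\phi=\|\cdot\|_\BB^p$: there is a constant $\mu>0$, depending only on $K$ and $p$, such that
\[
 \phi\!\left(\tfrac{x+y}{2}\right) \;\le\; \tfrac12\phi(x)+\tfrac12\phi(y)-\mu\,\|x-y\|^p \qquad (x,y\in\BB).
\]
Proving this — specifically the implication from the $\varepsilon$–$\delta$ modulus to the displayed estimate — is where I expect the real work to lie: it is a homogeneity argument that normalizes to vectors of equal norm on the sphere and quantifies the loss by $\eta$, essentially as in \cite{pisier:75}. (The reverse implication is easy: feed unit vectors into the displayed inequality.)

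Granting the displayed estimate, a routine convex-analysis step — iterate the midpoint inequality along the dyadic points $w_k=(1-2^{-k})x+2^{-k}y$, each the midpoint of $x$ and $w_{k-1}$, and pass to the limit in the difference quotients — promotes it to the subgradient form $\phi(x+y)\ge\phi(x)+g(y)+\mu'\|y\|^p$ for every $x,y\in\BB$ and every $g$ in the subdifferential $\partial\phi(x)$, with $\mu'$ a $p$-dependent multiple of $\mu$. Choosing $g=p\|x\|^{p-1}f_x$ for a norming functional $f_x\in\BB^*$ at $x$ (so $g\in\partial\phi(x)$), and shrinking $\mu'$ so that $\mu'\le1$ (which also covers $x=0$), this reads $\|x+y\|^p\ge\|x\|^p+p\|x\|^{p-1}f_x(y)+\mu'\|y\|^p$. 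Now let $(M_n)_{n\ge0}$ be a martingale for a filtration $A_0\subseteq A_1\subseteq\cdots$ and set $d_{n+1}=M_{n+1}-M_n$, so $\mathbb{E}(d_{n+1}\mid A_n)=0$; after a routine reduction to finitely-valued martingales adapted to finite filtrations — which also makes the selection $\omega\mapsto f_{M_n(\omega)}$ trivially $A_n$-measurable — followed by a limiting argument, applying the estimate pointwise on $X$ and integrating gives
\[
 \|M_{n+1}\|_{L^p(X {;\,} \BB)}^p \;\ge\; \|M_n\|_{L^p(X {;\,} \BB)}^p + \int_X g_n(d_{n+1}) + \mu'\,\|d_{n+1}\|_{L^p(X {;\,} \BB)}^p ,
\]
where $g_n=p\|M_n\|_\BB^{p-1}f_{M_n}$ is $A_n$-measurable, so that the middle integral equals $\int_X g_n\big(\mathbb{E}(d_{n+1}\mid A_n)\big)=0$. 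Telescoping from $0$ to $N$ gives $\|M_N\|^p\ge\|M_0\|^p+\mu'\sum_{n=0}^{N-1}\|d_{n+1}\|^p$ in $L^p(X {;\,} \BB)$, and since $\|M_N\|^p\le\sup_n\|M_n\|^p$, letting $N\to\infty$ by monotone convergence in the partial sums yields (2) with $C=1/\mu'$, a constant depending only on $p$ and $K$ (and on the isomorphism constant in the general case).

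For the converse, property (2) says exactly that $\BB$ has martingale cotype $p$, and $(2)\Rightarrow(1)$ is then Pisier's renorming theorem: such a space carries an equivalent norm whose modulus of convexity is of power type $p$, produced by an infimal-convolution-type construction from the martingale constant $C$ — roughly, the supremum of $\big(\sum_k\|d_k\|^p\big)^{1/p}$ over suitably normalized $\BB$-valued dyadic martingales ending at $x$ — after which one verifies equivalence of the two norms and the power-type modulus. The two genuinely hard points are thus Pisier's equivalence of $p$-uniform convexity with the midpoint inequality in the forward direction and the verification of the renorming in the converse; the martingale bookkeeping in between — the convex-analysis upgrade, the vanishing of the cross term, and the telescoping — is routine.
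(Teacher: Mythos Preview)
The paper does not actually prove this theorem: it is stated with attribution to Pisier \cite{pisier:75,pisier:11} and used as a black box (only the forward direction $(1)\Rightarrow(2)$ is needed, via Corollary~\ref{pisier:cor}). Your sketch is a faithful outline of Pisier's original argument --- the midpoint inequality for $\|\cdot\|^p$, its upgrade to a subgradient estimate, the vanishing of the cross term by the martingale property, and the renorming for the converse --- so there is nothing to compare; you have simply supplied what the paper deliberately omits.
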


We now turn our attention to $\lpb$, where $\BB$ is a $p$-uniformly convex Banach space. Following the notation used in \cite{jones:et:al:03}, let $\sigma_n$ denote the dyadic $\sigma$-algebra of subsets of $\ZZ$ generated by the intervals of the form $[h \cdot 2^n, (h + 1) \cdot 2^n)$. For any $f$ in $\lpb$ let $\cE_n f$ denote the expectation with respect to $\sigma_n$. Notice that the sequence $(\cE_n f)_{n \geq 0}$ is a \emph{reverse} martingale, since the $\sigma_n$'s become coarser as $n$ increases. As a corollary of Theorem~\ref{pisier:thm}, we have:
\begin{corollary}
\label{pisier:cor}
  Let $\BB$ be $p$-uniformly convex, for $p \geq 2$.  Then there is a constant $C$ as in Theorem~\ref{pisier:thm} such that for any increasing sequence $(n_k)_{k \in \NN}$ of natural numbers,
\[
 \sum_{k \geq 0} \| \cE_{n_{k+1}} f - \cE_{n_k} f \|^p_\lpb \leq C \| f \|^p_\lpb.
\]
\end{corollary}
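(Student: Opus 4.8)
The plan is to deduce the corollary from a single application of the forward implication of Theorem~\ref{pisier:thm}, after rearranging the reverse martingale $(\cE_n f)_{n \geq 0}$ into a genuine forward martingale.

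Fix the increasing sequence $(n_k)$. First note that $(\cE_{n_k} f)_{k \geq 0}$ is itself a reverse martingale: if $k \leq l$ then $n_k \leq n_l$, hence $\sigma_{n_l} \subseteq \sigma_{n_k}$, and the tower property gives $\cE_{n_k}(\cE_{n_l} f) = \cE_{n_k} f$. Since Theorem~\ref{pisier:thm} is phrased for forward martingales, I would fix a cutoff $N \in \NN$ and reverse the finite list $\cE_{n_N} f, \cE_{n_{N-1}} f, \ldots, \cE_{n_0} f$. Setting $M_j := \cE_{n_{N-j}} f$ for $0 \leq j \leq N$ and $M_j := M_N$ for $j > N$, and taking as filtration the $\sigma$-algebras $\sigma_{n_{N-j}}$ (constant past $j = N$), the filtration now increases with $j$, each $M_j$ is measurable with respect to the $j$-th algebra, and $\mathbb{E}(M_l \mid \sigma_{n_{N-j}}) = M_j$ for $j \leq l$ by the tower property again. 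Thus $(M_j)_{j \geq 0}$ is a forward martingale in the sense used in Theorem~\ref{pisier:thm} (eventually constant, which causes no trouble).

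Applying Theorem~\ref{pisier:thm}(2) to $(M_j)$, and using that $\cE_n$ is a contraction on $L^p(X {;\,} \BB)$ — it simply averages a function over the dyadic blocks of length $2^n$, so the estimate follows blockwise from Jensen's inequality, giving $\|M_j\|_\lpb \leq \|f\|_\lpb$ and hence $\sup_{j \geq 0} \|M_j\|^p_\lpb \leq \|f\|^p_\lpb$ — one obtains
\[
\sum_{j = 0}^{N-1} \|M_{j+1} - M_j\|^p_\lpb \;\leq\; \|M_0\|^p_\lpb + \sum_{j \geq 0} \|M_{j+1} - M_j\|^p_\lpb \;\leq\; C \|f\|^p_\lpb .
\]
Because $M_{j+1} - M_j = \cE_{n_{N-j-1}} f - \cE_{n_{N-j}} f$, the substitution $k = N - j - 1$ rewrites the leftmost sum as $\sum_{k=0}^{N-1} \|\cE_{n_{k+1}} f - \cE_{n_k} f\|^p_\lpb$. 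Since this bound does not depend on $N$, letting $N \to \infty$ gives the corollary, with the constant $C$ supplied by Theorem~\ref{pisier:thm} — in particular, when $\eta(\varepsilon) = K \varepsilon^p$, depending only on $p$ and $K$.

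The only mildly delicate point — the closest thing to an obstacle — is the bookkeeping for the reversal and the passage to the limit: one must verify that each truncated family really is a forward martingale for an increasing filtration and that the resulting inequality is uniform in the cutoff $N$. The other ingredients — contractivity of $\cE_n$ on $L^p(X {;\,} \BB)$, and the fact that all relevant features of $\BB$ are already absorbed into Theorem~\ref{pisier:thm} — are entirely routine.
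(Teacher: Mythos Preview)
Your proof is correct and follows essentially the same approach as the paper: truncate at a finite cutoff, reverse the finite list $\cE_{n_N} f, \ldots, \cE_{n_0} f$ to obtain a forward martingale, apply Theorem~\ref{pisier:thm} together with the contractivity bound $\|\cE_{n_k} f\|_\lpb \leq \|f\|_\lpb$, and pass to the limit. The paper's version is more terse, but the argument is the same.
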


\begin{proof}
It suffices to show $\sum_{k \in [0,m]} \| \cE_{n_{k+1}} f - \cE_{n_k} f \|^p_\lpb \leq C \| f \|^p_\lpb$ for every $m$. But for each fixed $m$ the sequence $\cE_{n_{m+1}} f, \cE_{n_m} f, \ldots, \cE_{n_1} f, \cE_{n_0} f$ is a martingale, and Theorem~\ref{pisier:thm} applies since $\sup_{k \in [0,m+1]} \| \cE_{n_k} \|_\lpb \leq \| f \|_\lpb$.
\end{proof}

For the rest of this section and the next, we let $p \geq 2$ and let $\BB$ denote a $p$-uniformly convex Banach space. Let $T$ be the shift map $T (i) = i + 1$ on $\ZZ$. Then $T$ gives rise to an isometry on $\lpb$, and we can consider the sequence $(A_n f)_{n \geq 1}$ of ergodic averages in $\lpb$ for each $f \in \lpb$. Notice that $(A_n f)$ is explicitly given by $(A_n f)(x) = \frac{1}{n} \sum_{i < n} f(x + i)$ for every $x$. We will prove the following:

\begin{theorem}
\label{lpb:main:thm}
 Let $p \geq 2$, and let $\BB$ be a Banach space with modulus of uniform convexity $\eta(\varepsilon) = K \varepsilon^p$. Then there is a constant $C$ depending only on $p$ and $K$ such that for any increasing sequence $(t_k)_{k \in \NN}$ of positive natural numbers,  
\[
\sum_k \| A_{t_{k+1}} f - A_{t_k} f \|_\lpb^p \leq C \cdot \| f \|^p_\lpb 
\]
\end{theorem}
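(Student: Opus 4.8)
The plan is to follow the strategy of Jones, Rosenblatt, and Wierdl for the classical $\ell^2$ case, replacing the use of orthogonal decomposition by the reverse-martingale inequality of Corollary~\ref{pisier:cor}. First I would reduce to comparing the ergodic average $A_n f$ with the conditional expectation $\cE_{k(n)} f$, where $k(n)$ is chosen so that $2^{k(n)}$ is comparable to $n$ (say $2^{k(n)} \leq n < 2^{k(n)+1}$). The point is to decompose, for each index $t_k$,
\[
 A_{t_{k+1}} f - A_{t_k} f = \bigl(A_{t_{k+1}} f - \cE_{m_{k+1}} f\bigr) + \bigl(\cE_{m_{k+1}} f - \cE_{m_k} f\bigr) + \bigl(\cE_{m_k} f - A_{t_k} f\bigr),
\]
where $m_k = k(t_k)$. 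Summing the $p$-th powers and using the inequality $\|a+b+c\|^p \leq 3^{p-1}(\|a\|^p + \|b\|^p + \|c\|^p)$, the middle terms are controlled by Corollary~\ref{pisier:cor} (after passing to the increasing rearrangement of the $m_k$, which only helps), so it remains to bound $\sum_k \| A_{t_k} f - \cE_{m_k} f \|_\lpb^p$. This is where the real work lies: one must show that the ``short variation'' or oscillation of $A_n f$ around the dyadic martingale within each scale is summable.

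For the remaining sum I would split it into two pieces at each dyadic scale. Write $\sum_k \| A_{t_k} f - \cE_{m_k} f\|^p$ and group the indices $k$ by the value of $m_k$; within the block where $m_k = j$, the relevant quantity is the variation of the sequence $(A_n f)$ for $n$ ranging over $[2^j, 2^{j+1})$ together with its distance from $\cE_j f$. The classical approach bounds $\| A_n f - \cE_j f\|$ by a telescoping/Abel-summation estimate comparing $A_n$ with $A_{2^j}$ and exploiting that $\cE_j$ is exactly the average over dyadic blocks of length $2^j$; one gets a gain because $A_n - A_{n'}$ for $n, n'$ in the same dyadic block is a small perturbation. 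Concretely I expect to need two lemmas (the ``two central lemmas adapted from \cite{jones:et:al:03}'' referred to in the excerpt): one giving a bound on the short-variation operator $\bigl(\sum_j \sup_{2^j \le n < 2^{j+1}} \| A_n f - A_{2^j} f\|^p\bigr)^{1/p} \lesssim \|f\|_\lpb$, and one giving $\bigl(\sum_j \| A_{2^j} f - \cE_j f\|^p\bigr)^{1/p} \lesssim \|f\|_\lpb$; both should be provable by the martingale-difference machinery of Theorem~\ref{pisier:thm} applied to suitable auxiliary martingales, together with Hölder's inequality in place of Cauchy–Schwarz wherever the $\ell^2$ proof used orthogonality.

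The main obstacle, as flagged in the excerpt itself, is precisely the replacement of Fourier/orthogonality arguments by martingale inequalities: in $\ell^2$ one can diagonalize the averaging operators and read off variation bounds from the spectral side, whereas here every estimate must be routed through the vector-valued martingale inequality of Pisier, and one loses the clean orthogonality of increments, so the bookkeeping of the short-variation sums and their interaction across scales becomes the delicate point. Once those two lemmas are in hand, assembling them with the triangle-type inequality above and Corollary~\ref{pisier:cor} gives the stated bound with a constant depending only on $p$ and $K$, since every constant introduced along the way (the $3^{p-1}$, the martingale constant $C$, and the scale-counting factors) has that form. The final step is purely mechanical: collect the three contributions and take $C$ to be an explicit function of the constants produced by Corollary~\ref{pisier:cor} and the two lemmas.
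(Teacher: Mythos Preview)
Your overall strategy---route everything through the dyadic reverse martingale and invoke Corollary~\ref{pisier:cor}---is the paper's strategy. But the specific decomposition you write down has a genuine gap: after the triangle inequality you are left with the sum
\[
\sum_k \| A_{t_k} f - \cE_{m_k} f \|_\lpb^p,
\]
and this quantity is \emph{not} bounded by $C\|f\|_\lpb^p$ in general. The difficulty is that arbitrarily many $t_k$ can fall in a single dyadic block, and for those $k$ the terms $\|A_{t_k} f - \cE_{m_k} f\|$ are all comparable and nonzero, with no cancellation among them. For a concrete failure take $p=2$, $\BB=\RR$, $f=\delta_0\in\ell^2(\ZZ)$, and $t_k = k$ for $1 \le k < 2^{J}$. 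A direct computation gives $\|A_n f - \cE_j f\|_{\ell^2}^2 \asymp 2^{-j}$ for every $n$ in the $j$th dyadic block, and there are $\asymp 2^{j}$ such $n$, so each block contributes $\asymp 1$; summing over $J$ blocks yields $\asymp J$, which is unbounded, while $\|f\|_{\ell^2}=1$. Your proposed ``short-variation'' lemma, stated with a supremum over $n$ inside the sum over $j$, cannot rescue this: it controls one term per block, and multiplying by the (unbounded) number of $t_k$ in that block recovers nothing.

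The paper's fix is to separate \emph{long} increments (those $k$ for which $t_k$ and $t_{k+1}$ lie in distinct dyadic blocks) from \emph{short} ones before doing anything else. For long increments there is at most one $t_k$ per block, so your three-term decomposition is legitimate and is controlled by Lemma~\ref{lemma:a} (essentially your second proposed lemma, applied to the thinned sequence) together with Corollary~\ref{pisier:cor}. The short increments are handled by a separate estimate, Lemma~\ref{lemma:b}, which bounds
\[
\sum_j \sum_{i \in S_j} \| A_{t_{i+1}} f - A_{t_i} f \|_\lpb^p
\]
directly---a genuine $p$-variation sum over \emph{all} increments within each block, not a supremum. That is the missing ingredient in your outline: you need a lemma that sums over every short increment, and you need to apply the martingale comparison only to the long increments, where the over-counting problem does not arise.
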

With $p = 2$ and $\BB = \RR$, this is a special case of \cite[Theorem 1.7]{jones:et:al:03}. In the next section, we will prove the following two lemmas:

\begin{lemma}
\label{lemma:a}
  Let $(t_k)_{k \in \NN}$ have the property that, for each $k$, $t_k \in [2^{k-1}, 2^k)$. Then there is a constant $C$ as in Theorem~\ref{lpb:main:thm} such that
\[
\sum_k \| A_{t_k} f - \cE_k f \|_\lpb^p \leq C \cdot \| f \|^p_\lpb 
\]
\end{lemma}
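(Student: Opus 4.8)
The plan is to adapt the scale-by-scale decomposition of Jones--Rosenblatt--Wierdl, replacing the orthogonal (Fourier) decoupling of scales used in the Hilbert-space argument by the martingale variation bound already recorded in Corollary~\ref{pisier:cor}. Write $\Delta_j f = \cE_j f - \cE_{j+1} f$ for the reverse-martingale differences of the dyadic filtration. First I would record two preliminary facts: $A_{t_k}$ is a contraction on $\lpb$ (it is an average of translations), and $\|\cE_m f\|_\lpb \to 0$ as $m \to \infty$ (by a direct count for finitely supported $f$, then in general since $\|\cE_m\|_{\lpb \to \lpb} \le 1$). These give the telescoping identities $f = \sum_{j \ge 0}\Delta_j f$ and $\cE_k f = \sum_{j \ge k}\Delta_j f$ in $\lpb$, and hence
\[
A_{t_k} f - \cE_k f \;=\; \sum_{j = 0}^{k-1} A_{t_k}\Delta_j f \;+\; \sum_{j \ge k}(A_{t_k} - I)\Delta_j f .
\]

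The core of the argument is a pair of off-diagonal decay estimates for the two families of terms. For $j < k$: the function $\Delta_j f$ is $\sigma_j$-measurable and satisfies $\cE_{j+1}\Delta_j f = 0$, so on each dyadic block of length $2^{j+1}$ it equals some $a_h \in \BB$ on the left half and $-a_h$ on the right half. Averaging such a function over an interval of length $t_k \ge 2^{k-1} \gg 2^j$ cancels the interior and leaves only two boundary pieces, each of norm at most $2^{j+1} t_k^{-1}\|a_h\|$; counting the integers charged to each boundary block and using $\|\Delta_j f\|_\lpb^p = 2^{j+1}\sum_h \|a_h\|^p$ then gives $\|A_{t_k}\Delta_j f\|_\lpb \le C\, 2^{j-k}\|\Delta_j f\|_\lpb$ with $C$ absolute. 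For $j \ge k$: now $\Delta_j f$ is constant on blocks of length $2^j > t_k$, so $(A_{t_k} - I)\Delta_j f$ vanishes at every $x$ for which $[x, x+t_k)$ does not cross a block boundary, i.e.\ except on the fewer than $t_k$ points of each block within $t_k$ of its right edge, where it is bounded by $\|\Delta_j f(x)\| + \|\Delta_j f(x + 2^j)\|$; the analogous count, together with $\|\Delta_j f\|_\lpb^p = 2^j \sum_h \|(\Delta_j f)_h\|^p$, gives $\|(A_{t_k} - I)\Delta_j f\|_\lpb \le C\, 2^{(k-j)/p}\|\Delta_j f\|_\lpb$.

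Putting these together yields $\|A_{t_k} f - \cE_k f\|_\lpb \le C\sum_j a_{k-j}\|\Delta_j f\|_\lpb$, where $a_m = 2^{-m}$ for $m \ge 1$ and $a_m = 2^{m/p}$ for $m \le 0$, so that $(a_m)_{m \in \ZZ}$ is summable with $\ell^1$-norm depending only on $p$. Young's inequality for convolution then gives
\[
\Big(\sum_k \|A_{t_k} f - \cE_k f\|_\lpb^p\Big)^{1/p} \;\le\; C\,\|(a_m)\|_{\ell^1}\Big(\sum_j \|\Delta_j f\|_\lpb^p\Big)^{1/p},
\]
and $\sum_j \|\Delta_j f\|_\lpb^p = \sum_j \|\cE_j f - \cE_{j+1} f\|_\lpb^p \le C'\|f\|_\lpb^p$ by Corollary~\ref{pisier:cor} applied with $n_k = k$, the constant $C'$ depending only on $p$ and $K$. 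This gives the asserted inequality with a constant of the required form.

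The hard part is the two off-diagonal estimates: over $\ell^2$ they would be immediate from the fact that the multipliers of $A_{t_k}$ and $\cE_k$ are both localized near frequency $2^{-k}$, but here the decay has to be produced by hand from the explicit block structure of the $\Delta_j f$, with careful bookkeeping of the boundary terms and of how many integers are assigned to each dyadic block at a given scale. Everything downstream — the telescoping decomposition, the limiting arguments, and the convolution/Schur-type assembly — is routine once Corollary~\ref{pisier:cor} is available.
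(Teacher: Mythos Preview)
Your proof is correct and follows essentially the same route as the paper: decompose $f$ into reverse-martingale differences, prove geometric off-diagonal decay of $(A_{t_k}-\cE_k)$ applied to each difference by a direct block-and-boundary count, and then assemble via Young's inequality together with Corollary~\ref{pisier:cor}. The only noteworthy variation is that in the fine-scale case $j<k$ you exploit the $\pm a_h$ structure on $2^{j+1}$-blocks to get decay $2^{j-k}$ in norm, whereas the paper uses a H\"older step (its inequality~(\ref{eq:holder})) and records the slightly weaker rate $2^{(n-k)/p}$; both kernels are summable in $\ell^1$, so the difference is cosmetic.
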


\begin{lemma}
\label{lemma:b}
  Let $(t_i)_{i \in \NN}$ be any increasing sequence of natural numbers. Then there is a constant $C$ as in Theorem~\ref{lpb:main:thm} such that
\[
 \sum_k \sum_{i \in S_k} \| A_{t_{i+1}} f - A_{t_i} f \|_\lpb^p \leq C \cdot \| f \|^p_\lpb 
\]
  where $S_k = \{ i \; | \; t_i, t_{i+1} \in [2^{k-1}, 2^k]\}$. 
\end{lemma}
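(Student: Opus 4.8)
The plan is to reduce the double sum to a single long-range variational sum on a coarser scale where Corollary~\ref{pisier:cor} applies. Fix $k$ and an index $i \in S_k$, so that $t_i, t_{i+1} \in [2^{k-1}, 2^k]$. The first step is the standard decomposition of the difference of two ergodic averages whose lengths are comparable: if $m < n$ with $m, n \in [2^{k-1}, 2^k]$, then writing $nA_n f - mA_m f = \sum_{m \le j < n} f(\cdot + j)$ one gets
\[
 A_n f - A_m f = \Bigl(\frac{1}{n} - \frac{1}{m}\Bigr)\sum_{j < m} f(\cdot + j) + \frac{1}{n}\sum_{m \le j < n} f(\cdot + j) = \frac{m-n}{n} A_m f + \frac{n-m}{n} \cdot \frac{1}{n-m}\sum_{m \le j < n} f(\cdot + j).
\]
Both terms have the form $\frac{n-m}{n}$ times an average of $f$ over an interval of length at most $2^{k-1}$ contained in $[2^{k-1}, 2^{k+1})$; since $n \ge 2^{k-1}$ we have the coefficient bounded by $\frac{n-m}{2^{k-1}}$. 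Summing telescopically over $i \in S_k$ (the $t_i$ are increasing and all lie in an interval of length $\le 2^{k-1}$, so $\sum_{i \in S_k}(t_{i+1}-t_i) \le 2^{k-1}$), an application of H\"older's inequality in the form $\bigl(\sum_i |c_i| b_i\bigr)^p \le \bigl(\sum_i |c_i|\bigr)^{p-1}\sum_i |c_i| b_i^p$ with $c_i = (t_{i+1}-t_i)/2^{k-1}$ reduces $\sum_{i \in S_k} \|A_{t_{i+1}}f - A_{t_i}f\|^p$ to a bound by a maximal-type quantity: essentially $2^{-(k-1)}$ times a sum over $j$ of $\|$average of $f$ over a dyadic-length interval based near $j$ with $j$ ranging over $[2^{k-1},2^{k+1})\|^p$, and one must then sum over $k$.

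The key estimate, to be carried out exactly as in the corresponding step of \cite{jones:et:al:03} but with Cauchy's inequality replaced by H\"older's, is the \emph{short variation} bound: the quantity obtained above is controlled, scale by scale, by the square-function (here $p$-th power) of the martingale differences $\cE_k f - \cE_{k+1} f$ plus a genuinely "local" oscillation term that telescopes. Concretely, one shows that for each $k$,
\[
 \sum_{i \in S_k}\|A_{t_{i+1}}f - A_{t_i}f\|^p_\lpb \le C\bigl(\|\cE_{k-1}f - \cE_{k+1}f\|^p_\lpb + R_k(f)\bigr),
\]
where $\sum_k R_k(f) \le C\|f\|^p_\lpb$ by a direct computation using translation-invariance of the $\lpb$-norm and the fact that averaging over an interval of length $\sim 2^k$ differs from $\cE_k$ by an operator whose $\ell^p(\BB)$ Fourier/convolution norm is summable in $k$ — this last point is where the argument of \cite{jones:et:al:03} uses harmonic analysis on $\ell^2$, and where we instead use only that convolution with a fixed finitely-supported kernel of total mass $1$ is a contraction on $\lpb$ together with a crude bound on how $A_n f$ approximates $\cE_k f$. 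Finally, summing the displayed inequality over $k$, the first terms are handled by Corollary~\ref{pisier:cor} applied to the subsequence $(n_k) = (k)$ of the reverse martingale $(\cE_n f)$, and the second by the bound on $\sum_k R_k(f)$.

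The main obstacle is the short-variation estimate, i.e.\ controlling $\sum_{i \in S_k}\|A_{t_{i+1}}f - A_{t_i}f\|^p$ uniformly over all increasing sequences $(t_i)$ lying in a single dyadic block, by something summable in $k$. In the Hilbert/$\ell^2$ setting this is where orthogonality is used to pass from the sup over all sequences to a fixed square function; here the substitute, as flagged in the paper's overview, is to use martingale inequalities for $\lpb$ in place of orthogonal decomposition. I expect the correct route is: (i) bound the within-block variation by a fixed "jump" functional $V^p$ that does not depend on $(t_i)$ — for instance the $p$-variation of the curve $n \mapsto A_n f$ restricted to $[2^{k-1},2^k]$, dominated via the decomposition above by $\sum_{j \in [2^{k-1},2^{k+1})}$ of contributions each of size $\lesssim 2^{-k}\|(\text{local average of }f)\|^p$; and (ii) show this fixed functional, summed over $k$, is $\le C\|f\|^p_\lpb$ by comparing $A_n f$ to $\cE_k f$ for $n \asymp 2^k$ and invoking Corollary~\ref{pisier:cor} together with Lemma~\ref{lemma:a}. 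Steps (i)–(ii) are exactly the $L^p(\BB)$-valued analogue of the "short variation" argument of \cite{jones:et:al:03}, and verifying that every appeal to Cauchy–Schwarz there can be replaced by H\"older and every appeal to $\ell^2$-orthogonality by Pisier's martingale inequality (Theorem~\ref{pisier:thm}) is the technical heart of the proof.
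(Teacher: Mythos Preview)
Your proposal has a genuine gap at the heart of the argument. You assert that for each $k$,
\[
 \sum_{i \in S_k}\|A_{t_{i+1}}f - A_{t_i}f\|^p_\lpb \le C\bigl(\|\cE_{k-1}f - \cE_{k+1}f\|^p_\lpb + R_k(f)\bigr)
\]
with $\sum_k R_k(f) \le C\|f\|^p_\lpb$, but you never prove this, and the justification you offer --- that $A_n - \cE_k$ for $n \asymp 2^k$ is an operator whose $\lpb$-norm is \emph{summable in $k$} --- is simply false: each such operator has norm of order $1$ (take $f$ equal to $+1$ on the left half of a dyadic block and $-1$ on the right half). Likewise, the earlier step where you decompose $A_{t_{i+1}}f - A_{t_i}f$ and apply H\"older with weights $c_i = (t_{i+1}-t_i)/2^{k-1}$ yields a bound of order $\|f\|^p_\lpb$ \emph{per dyadic block}, which diverges when summed over $k$. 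Finally, the suggestion to compare $A_n f$ to $\cE_k f$ and invoke Lemma~\ref{lemma:a} does not close the gap either: Lemma~\ref{lemma:a} controls one value $t_k$ per block, whereas the short variation involves arbitrarily many $t_i$ in a single block, and the triangle inequality would multiply the bound by $|S_k|$.

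The paper's proof proceeds quite differently. It first decomposes $f = \sum_n d_n$ into martingale differences $d_n = \cE_{n-1}f - \cE_n f$, and then proves the \emph{diagonal decay} estimate
\[
 \sum_{i \in S_k}\|A_{t_{i+1}}d_n - A_{t_i}d_n\|^p_\lpb \le c\cdot 2^{-|n-k|}\,\|d_n\|^p_\lpb
\]
by two separate pointwise calculations: for $n>k$ one uses that $d_n$ is constant on dyadic intervals of length $2^{n-1}$, so only $\le 2^k$ boundary points per interval contribute; for $k\ge n$ one uses that $d_n$ has mean zero on dyadic intervals of length $2^n$, so the sums defining $A_{t_i}d_n$ collapse to at most $2^{n+1}$ surviving terms, and H\"older's inequality gives the decay. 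The sum over $k$ and $n$ is then handled by Young's inequality for $\ell^1 * \ell^p$, and only at the very end is Corollary~\ref{pisier:cor} invoked, to bound $\sum_n \|d_n\|^p_\lpb$. The missing idea in your sketch is precisely this $2^{-|n-k|}$ off-diagonal decay for the martingale-difference pieces; without it there is no mechanism to make the sum over $k$ converge.
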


Once again, when $p = 2$ and $\BB = \RR$, Lemmas~\ref{lemma:a} and \ref{lemma:b} are special cases of Theorems $A'$ and $B$ of \cite{jones:et:al:03}. The proof of Theorem~\ref{lpb:main:thm} follows easily from these two lemmas:
\begin{proof}[Proof of Theorem~\ref{lpb:main:thm}]
  Given the sequence $(t_k)$, let $S = \bigcup_k S_k$ (the ``short'' increments), where $S_k$ is as defined in Lemma~\ref{lemma:b}. Let $L = \NN - S$ (the ``long'' increments). Also let the sequence $(n_k)$ be such that $t_k \in [2^{{n_k}-1}, 2^{n_k})$ for each $k$. Then we have
\begin{align*}
\sum_k \| A_{t_{k+1}} f - A_{t_k} f \|_\lpb^p & = \sum_{k \in L} \| A_{t_{k+1}} f - A_{t_k} f \|_\lpb^p +
  \sum_{i \in S} \| A_{t_{i+1}} f - A_{t_i} f \|_\lpb^p \\
& \leq \sum_{k \in L} \| A_{t_{k+1}} f - \cE_{n_{k+1}} f \|_\lpb^p
 + \sum_{k \in L} \| A_{t_k} f - \cE_{n_k} f \|_\lpb^p \\
& \quad \quad + \sum_{k \in L} \| \cE_{n_{k+1}} f - \cE_{n_k} f \|_\lpb^p \\
& \quad \quad + \sum_k \sum_{i \in S_k} \| A_{t_{i+1}} f - A_{t_i} f \|_\lpb^p.
\end{align*}
Lemma~\ref{lemma:a} provides the requisite bounds on the first two terms, Corollary~\ref{pisier:cor} provides the requisite bound on the third, and Lemma~\ref{lemma:b} provides the requisite bound on the fourth.
\end{proof}

From Theorem~\ref{lpb:main:thm}, we obtain our main theorem.

\begin{proof}[Proof of Theorem~\ref{new:main:thm}] 
 Fix a $p$-uniformly convex Banach space $\BB$, $B_1, B_2 > 0$, $x \in \BB$, and a linear map $T : \BB \to \BB$ satisfying $B_1 \| y \| \leq \| T^n y \| \leq B_2 \| y \|$ for every $n \in \NN$ and $y \in \BB$. We need to show
\[
 \sum_k \|A_{t_{k+1}} x - A_{t_k} x\|_\BB^p \leq C \| x \|_\BB^p
\]
 for an appropriate constant $C$. Note that it suffices to show
\[
 \sum_{k < m} \|A_{t_{k+1}} x - A_{t_k} x\|_\BB^p \leq C \| x \|_\BB^p
\]
 for any $m$. We shift the setting to $\lpb$ by choosing an $N$ much larger than $t_{m + 1}$ and defining
\[
 f(i) = \left\{ 
  \begin{array}{ll}
     T^i x & \mbox{if $i \in [0, N)$} \\
     0 & \mbox{otherwise.}
  \end{array}
 \right.
\]
We have $\| f \|_{\lpb}^p \leq N \cdot B_2^p \cdot \| x \|^p$. Also, we have $(A_n f)(i) = A_n(T^i x)$ provided $i + n < N$ and $i \geq 0$, where $(A_n f)$ denotes the ergodic average with respect to the shift map on $\lpb$, and $A_n(T^i x)$ denotes the ergodic average with respect to the map $T$ on $\BB$. Thus we have
\begin{align*}
\sum_{k < m} \| A_{t_{k+1}} f - A_{t_k} f \|^p_\lpb & = \sum_{k < m} \sum_{i\in \ZZ} \| A_{t_{k+1}} f(i) - A_{t_k} f(i) \|_\BB^p \\
& = \sum_{k < m} \sum_{i \in [0, N)} \| A_{t_{k+1}} (T^i x) - A_{t_k} (T^i x) \|_\BB^p +O(1),
\end{align*}
where the term $O(1)$ accounts for averages which overlap the boundary of $[0,N)$. The constant implicit in that term depends on $t_m$ but not $N$. Turning the equation around and applying Theorem~\ref{lpb:main:thm}, we have
\begin{align*}
\sum_{k < m} \sum_{i \in [0, N)} \| A_{t_{k+1}} (T^i x) - A_{t_k} (T^i x) \|^p_\BB &
  \leq C \cdot \| f \|^p_\lpb + O(1) \\
& \leq C \cdot N \cdot B_2^p \cdot \| x \|^p + O(1).
\end{align*}
On the other hand, we have
\begin{align*}
\sum_{k < m} \sum_{i \in [0, N)} \| A_{t_{k+1}} (T^i x) - A_{t_k} (T^i x) \|^p_\BB &
  =  \sum_{k < m} \sum_{i \in [0, N)} \| T^i (A_{t_{k+1}} x - A_{t_k} x) \|^p_\BB \\
& \geq B_1^p \cdot N \cdot \sum_{k < m} \| A_{t_{k+1}} x - A_{t_k} x \|^p_\BB.
\end{align*}
Combining these inequalities and dividing by $B_1^p N$ yields
\[
\sum_{k < m} \| A_{t_{k+1}} x - A_{t_k} x \|^p_\BB \leq C \cdot (B_2 / B_1)^p \cdot \| x \|^p + O(1 / N).
\]
Letting $N$ approach infinity yields the desired conclusion.
\end{proof}

\section{The main lemmas}
\label{lemmas:section}

We now turn to Lemma~\ref{lemma:a} and \ref{lemma:b}. We will prove them by adapting the proofs of Theorems $A'$ and $B$ in \cite{jones:et:al:03} from the setting of $\ell^2$ to $\lpb$, using Corollary~\ref{pisier:cor} in place of orthogonal decomposition in $\ell^2$, and replacing standard inequalities in $\ell^2$ with their counterparts in $\ell^p$. The changes are fairly straightforward, but we spell out the details for completeness.

\newtheorem*{lemma:a}{Lemma \ref{lemma:a}}
\begin{lemma:a}
  Let $(t_k)_{k \in \NN}$ have the property that, for each $k$, $t_k \in [2^{k-1}, 2^k)$. Then there is a constant $C$ as in Theorem~\ref{lpb:main:thm} such that
\[
\sum_k \| A_{t_k} f - \cE_k f \|_\lpb^p \leq C \cdot \| f \|^p_\lpb 
\]
\end{lemma:a}

\begin{proof}
Let $(t_k)$ be as in the hypothesis, and let $d_1 f = f - \cE_1 f$, $d_n f = \cE_{n-1} f - \cE_n f$ for each $n > 1$, and $d_n f = 0$ for each $n \leq 0$. We will show
\begin{equation*}
\| A_{t_k} d_n - \cE_k d_n \|^p_\lpb \leq c \cdot 2^{-|n-k|} \| d_n \|^p_\lpb.
\end{equation*}
This is sufficient, because then we have
\begin{align*}
  \sum_k \| A_{t_k} f - \cE_k f \|^p_\lpb & \leq \sum_k \Big(\sum_{n \in \ZZ} \| A_{t_k} d_n - \cE_k d_n \|_\lpb\Big)^p \\
  & \leq \sum_k \Big(\sum_{n \in \ZZ} (c \cdot 2^{-|n-k|})^{1/p} \cdot \| d_n \|_\lpb\Big)^p
\end{align*}
and, using Young's inequality in the form $\| \alpha * \beta \|^p_{\ell^p} \leq \| \alpha \|^p_{\ell^1} \| \beta \|^p_{\ell^p}$ with $\alpha(n) =  c \cdot 2^{-|n|}$ and $\beta(n) = \| d_n \|_\lpb$, we can continue
\begin{align*}
\ldots  & \leq \Big(\sum_{n \in \ZZ} (c \cdot 2^{-|n|})^{1/p}\Big)^p \sum_{n \in \ZZ} \| d_n \|^p_\lpb \\
  & \leq c' \cdot \| f \|^p_\lpb,
\end{align*}
using Corollary~\ref{pisier:cor} in the last step.

First, consider the case where $n > k$. This implies $\cE_k d_n = d_n$, so  
\begin{align*}
  \| A_{t_k} d_n - \cE_k d_n \|^p_\lpb & = \| A_{t_k} d_n - d_n \|^p_\lpb \\
& = \sum_{x \in \ZZ} \| (A_{t_k} d_n)(x) - d_n(x) \|^p_\BB \\
& = \sum_{h \in \ZZ} \sum_{x \in D_h} \| (A_{t_k} d_n)(x) - d_n(x) \|^p_\BB,
\end{align*}
where $D_h = [h \cdot 2^{n-1}, (h + 1) \cdot 2^{n-1})$. Notice that $d_n(x)$ is constant on each interval $D_h$, and $(A_{t_k} d_n)(x)$ will have the same value as $d_n(x)$for $x \in D_h$, unless $x$ is close enough to the end of the interval so that $[x, x + t_k)$ extends into $D_{h+1}$. For each $h$, let $m_h$ denote the constant value of $\|d_n(x)\|_\BB$ on the interval $D_h$. Then for any $h$, $\| (A_{t_k} d_n)(x) - d_n(x) \|_\BB$ has the value $0$ except for at most $t_k$ values of $x$, and has value less than or equal to $2 \max (m_h, m_{h+1})$ for these exceptional values of $x$. Thus we have
\begin{align*}
\ldots & \leq t_k \cdot 2^p \cdot \sum_{h \in \ZZ} \max(m_h, m_{h+1})^p \\
& \leq 2^k \cdot 2^p \cdot \Big( \sum_{h \in \ZZ} m_h^p + \sum_{h \in \ZZ} m_{h+1}^p \Big) \\
& = 2 \cdot 2 \cdot  2^p \cdot 2^{k - n} \sum_{x \in \ZZ} \| d_n(x) \|^p_\BB,
\end{align*}
as required.

Next, we consider the case where $k \geq n$. In that case, $\cE_k d_n = 0$, and we need to show 
$\| A_{t_k} d_n \|^p_\lpb \leq c \cdot 2^{n-k} \| d_n \|^p_\lpb$. We will show that for every $x$, we have
\[
 \| (A_{t_k} d_n) (x) \|^p_\BB \leq c \cdot 2^{n - k} \cdot 2^{-k} \cdot \sum_{j < 2^k} \| d_n(x+j) \|^p_\BB.
\]
Summing both sides over $x \in \ZZ$ yields the desired inequality. 

First, we show that for any subset $A$ of $[0, 2^k)$ and any $x$, we have
\begin{equation}
\label{eq:holder}
\Big\| \sum_{j \in A} d_n(x + j) \Big\|^p_\BB \leq 2^{(n+1) \cdot (p - 1)} \sum_{j \in A} \| d_n(x +j) \|^p_\BB.
\end{equation}
Let $D'_h  = [h \cdot 2^n, (h + 1) \cdot 2^n)$. Note that $\sum_{i \in D'_h} d_n(i) = 0$ for every $h$, so we have 
\[
\sum_{j \in A} d_n(x + j) = \sum_{j \in H} d_n(x + j),
\] 
where $H$ is the set of at most $2 \cdot 2^n$ elements at either end of the interval $[x, x + t_k)$ that are not contained in any subinterval $D'_h$ of $[x, x +t_k)$. By H\"older's inequality we have
\begin{align*}
 \Big\| \sum_{j \in A} d_n(x+j) \Big\|_\BB^p & = \Big\| \sum_{j \in A} (d_n(x+j) \cdot 1_H(x+j)) \Big\|_\BB^p \\ 
  & \leq \sum_{j \in A} \| d_n(x+j) \|_\BB^p \cdot | H |^{p-1} \\
  & \leq 2^{(n + 1) \cdot (p-1)} \cdot \sum_{j \in A} \| d_n(x+j) \|_\BB^p.
\end{align*}
So we have
\begin{align*}
 \| (A_{t_k} d_n) (x) \|^p_\BB & = t_k^{-p} \cdot \Big\| \sum_{j < t_k} d_n(x +j) \Big\|^p_\BB \\
  & \leq 2^{-(k-1)p} \cdot 2^{(n + 1) \cdot (p-1)} \cdot \sum_{j < 2^k} \| d_n(x + j) \|_\BB^p \\
  & = 2^{2p-1} \cdot 2^{(n - k) \cdot (p - 1)} \cdot 2^{-k} \cdot \sum_{j < 2^k} \| d_n(x + j) \|_\BB^p \\
  & \leq 2^{2p-1} \cdot 2^{n - k} \cdot 2^{-k} \cdot \sum_{j < 2^k} \| d_n(x + j) \|_\BB^p,
\end{align*}
as required.
\end{proof}

\newtheorem*{lemma:b}{Lemma \ref{lemma:b}}
\begin{lemma:b}
  Let $(t_i)_{i \in \NN}$ be any increasing sequence of natural numbers. Then there is a constant $C$ as in Theorem~\ref{lpb:main:thm} such that
\[
 \sum_k \sum_{i \in S_k} \| A_{t_{i+1}} f - A_{t_i} f \|_\lpb^p \leq C \cdot \| f \|^p_\lpb 
\]
  where $S_k = \{ i \; | \; t_i, t_{i+1} \in [2^{k-1}, 2^k]\}$. 
\end{lemma:b}

\begin{proof}
As in the proof of Lemma~\ref{lemma:a}, it suffices to show
\begin{equation*}
  \sum_{i \in S_k} \| A_{t_{i+1}} d_n - A_{t_i} d_n \|_\lpb^p \leq c \cdot 2^{-|n-k|} \cdot \| d_n \|^p_\lpb
\end{equation*}
because then we have
\begin{align*}
 \sum_k \sum_{i \in S_k} \| A_{t_{i+1}} f - A_{t_i} f \|_\lpb^p & \leq
     \sum_k \sum_{i \in S_k} \big(\sum_{n \in \ZZ} \| A_{t_{i+1}} d_n - A_{t_i} d_n \|_\lpb\big)^p \\
 & = \sum_k \bigg(\Big(\sum_{i \in S_k} \big(\sum_{n \in \ZZ} \| A_{t_{i+1}} d_n - A_{t_i} d_n \|_\lpb\big)^p\Big)^{1/p}\bigg)^p \\
 & \leq \sum_k \bigg(\sum_{n \in \ZZ} \Big(\sum_{i \in S_k} \| A_{t_{i+1}} d_n - A_{t_i} d_n \|_\lpb^p\Big)^{1/p}\bigg)^p \\
 & \leq \sum_k \Big(\sum_{n \in \ZZ} \big(c \cdot 2^{-|n-k|} \cdot \| d_n \|^p_\lpb\big)^{1/p}\Big)^p \\
 & = \sum_k \Big(\sum_{n \in \ZZ} (c \cdot 2^{-|n-k|})^{1/p} \cdot \| d_n \|_\lpb\Big)^p \\
 & \leq \Big(\sum_{n \in \ZZ} (c \cdot 2^{-|n|})^{1/p}\Big)^p \sum_{n \in \ZZ} \| d_n \|^p_\lpb \\
 & \leq c' \cdot \| f \|^p_\lpb,
\end{align*}
where again we use Young's inequality in the second-to-last step.

First, consider the case $n > k$. Set $D_h = [h \cdot 2^{n-1}, (h+1) \cdot 2^{n-1})$ as in the proof of Lemma~\ref{lemma:a}, and set $m_h$ to be the constant value of $\| d_n(x) \|_\BB$ on $D_h$. Then for every $x \in D_h$, we have
\begin{align*}
\sum_{i \in S_k} & \| (A_{t_{i+1}} d_n)(x) - (A_{t_i} d_n)(x) \|_\BB \\
  & \leq \sum_{i \in S_k} \bigg((t^{-1}_i - t^{-1}_{i+1}) \cdot \sum_{j \in [0,t_i)} \| d_n(x +j) \|_\BB \bigg) + \sum_{i \in S_k} \bigg(t^{-1}_{i+1} \cdot \sum_{j \in [t_i, t_{i+1})} \| d_n(x + j) \|_\BB\bigg) \\
& \leq \sum_{i \in S_k} \bigg((t^{-1}_i - t^{-1}_{i+1}) \cdot \sum_{j \in [0, 2^k)} \| d_n(x +j) \|_\BB  \bigg)
  + \sum_{i \in S_k} \bigg( 2^{-(k-1)} \cdot \sum_{j \in [t_i, t_{i+1})} \| d_n(x + j) \|_\BB \bigg) \\
& \leq \bigg( \sum_{i \in S_k} (t^{-1}_i - t^{-1}_{i+1})  \bigg) \cdot \sum_{j \in [0, 2^k)} \| d_n(x +j) \|_\BB  
  + 2^{-(k-1)} \cdot \sum_{j \in [0, 2^k)} \| d_n(x + j) \|_\BB \\
& \leq 2 \cdot 2^{-(k-1)} \cdot 2^k \max(m_h, m_{h+1}) \\
& = 4 \max(m_h, m_{h+1}).
\end{align*}
Moreover, for all but at most $2^k$ values of $x$ such that $[x, x + 2^k)$ extends into $D_{h+1}$, the left-hand side of the preceding inequality is equal to $0$. Hence,
\begin{align*}
\sum_{i \in S_k} \| A_{t_{i+1}} d_n - A_{t_i} d_n \|_\lpb^p & 
  = \sum_{h \in \ZZ} \sum_{x \in D_h} \sum_{i \in S_k} \| (A_{t_{i+1}} d_n)(x) - (A_{t_i} d_n)(x) \|_\BB^p \\
& \leq \sum_{h \in \ZZ} \sum_{x \in D_h} \Big(\sum_{i \in S_k} \| (A_{t_{i+1}} d_n)(x) - (A_{t_i} d_n)(x) \|_\BB \Big)^p \\
& \leq \sum_{h \in \ZZ} 2^k \cdot (4 \max(m_h, m_{h+1}))^p \\
& \leq 2^k \cdot 4^p \cdot \Big(\sum_{h \in \ZZ} m_h^p + \sum_{h \in \ZZ} m_{h+1}^p \Big)\\
& = c \cdot 2^{k-n} \| d_n \|^p_\lpb,
\end{align*}
as required, with $c = 2 \cdot 2 \cdot 4^p$.

Now consider the case $k \geq n$. As in the proof of Lemma~\ref{lemma:a}, it suffices to prove
\[
 \sum_{i \in S_k} \| ((A_{t_{i+1}} - A_{t_i}) d_n)(x) \| ^p \leq c \cdot 2^{n - k} \cdot 2^{-k} \sum_{j < 2^k} \| d_n(x + j) \|^p_\BB, 
\]
because then summing over $x$ yields the desired inequality. For each $x$, we have
\begin{multline*}
 \sum_{i \in S_k} \| ((A_{t_{i+1}} - A_{t_i}) d_n)(x) \| ^p \leq \sum_{i \in S_k} \bigg( (t^{-1}_i - t^{-1}_{i+1}) \cdot \bigg\|  \sum_{j \in [0, t_i)} d_n(x +j) \bigg\|_\BB  \bigg)^p \\
  + \sum_{i \in S_k} \bigg( 2^{-(k-1)} \cdot \bigg\| \sum_{j \in [t_i, t_{i+1})} d_n(x + j) \bigg\|_\BB \bigg)^p.
\end{multline*}
Write the left-hand side as $T_1 +T_2$. Using equation (\ref{eq:holder}), we have
\begin{align*}
 T_1 & \leq \sum_{i \in S_k} \bigg((t^{-1}_i - t^{-1}_{i+1})^p \cdot 2^{(n+1) \cdot (p-1)} \sum_{j \in [0,t_i)} \| d_n(x +j) \|^p \bigg) \\
  & \leq  2^{(n+1) \cdot (p-1)} \cdot \sum_{j < 2^k} \| d_n(x +j) \|^p \cdot \Big(\sum_{i \in S_k} (t^{-1}_i - t^{-1}_{i+1}) \Big)^p\\
  & \leq 2^{(n+1) \cdot (p-1)} \cdot 2^{-(k-1) p} \cdot \sum_{j < 2^k} \| d_n(x +j) \|^p
\end{align*}
Similarly, we have
\begin{align*}
T_2 & \leq 2^{-(k-1)p} \cdot 2^{(n + 1) \cdot (p - 1)} \cdot \sum_{i \in S_k} \sum_{j \in [t_i, t_{i+1})} 
  \| d_n(x +j) \|^p \\
  & \leq 2^{(n+1) \cdot (p-1)} \cdot 2^{-(k-1) p} \cdot \sum_{j < 2^k} \| d_n(x +j) \|^p,
\end{align*}
and by the calculation at the end of the proof of Lemma~\ref{lemma:a}, we are done.
\end{proof}

In the case where $\lpb$ is $\ell^2$, Theorems $A'$ and $B$ in \cite{jones:et:al:03} are more general than our Lemma~\ref{lemma:a} and \ref{lemma:b}, in three senses:
\begin{enumerate}
 \item The sequences $t_k$ can depend on $x$. 
 \item Rather than intervals $[0, t)$ in $\ZZ$, they apply to cubes in $\ZZ^d$, for arbitrary $d$.
 \item The sequence of intervals $([0,t))_{t \in \NN}$ are replaced by any sequence of cubes $(A_t)_{t \in \NN}$ satisfying certain constraints.
\end{enumerate}
Although the definitions and notation become more complex, the proofs in \cite{jones:et:al:03} have essentially the same structure as the ones we have presented here, and can again be adapted to $\lpb$ following the strategies described above. 

Using the first generalization, along with the analogous martingale inequalities, Theorem~\ref{lpb:main:thm} can be strengthened in the following ways:
\begin{theorem}
\label{lpb:stronger:thm}
  Let $p \geq 2$, and let $\BB$ be a Banach space with modulus of uniform convexity $\eta(\varepsilon) = K \varepsilon^p$. Then there is a constant $C$ depending only on $p$ and $K$ such that for any increasing sequence $(t_k)_{k \in \NN}$ of positive natural numbers,  
\[
 \left\| \left(\sum_k \sup_{u, v \in [t_k, t_{k+1}]} \| A_u f - A_v f \|_\BB^p \right)^{1/p} 
 \right\|_{\ell^p} \leq C \cdot \| f \|_\lpb. 
\]
\end{theorem}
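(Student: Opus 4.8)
The plan is to reduce the statement, by the standard interleaving device, to a version of Theorem~\ref{lpb:main:thm} in which the increasing sequence may depend on the point $x\in\ZZ$ while each of its terms stays in a prescribed fixed window — essentially the ``first generalization'' of the lemmas mentioned above. Fix $f\in\lpb$ and an increasing sequence $(t_k)$ of positive integers. Each window $[t_k,t_{k+1}]$ is a finite set of integers, so for every $x$ I can choose $u_k(x)\le v_k(x)$ in it with
\[
\|(A_{u_k(x)}f)(x)-(A_{v_k(x)}f)(x)\|_\BB=\sup_{u,v\in[t_k,t_{k+1}]}\|(A_uf)(x)-(A_vf)(x)\|_\BB .
\]
Since $v_k(x)\le t_{k+1}\le u_{k+1}(x)$, interleaving yields a nondecreasing sequence of positive integers $s_0(x)=u_0(x),\,s_1(x)=v_0(x),\,s_2(x)=u_1(x),\dots$ with $s_j(x)\in[t_{\lfloor j/2\rfloor},t_{\lfloor j/2\rfloor+1}]$ (deleting repetitions makes it strictly increasing, harmlessly), and its step from $s_{2k}(x)$ to $s_{2k+1}(x)$ realizes the $k$-th supremum above, so pointwise in $x$
\[
\sum_k\Big(\sup_{u,v\in[t_k,t_{k+1}]}\|(A_uf)(x)-(A_vf)(x)\|_\BB\Big)^p\le\sum_j\|(A_{s_{j+1}(x)}f)(x)-(A_{s_j(x)}f)(x)\|_\BB^p .
\]
Summing over $x$ and taking $p$-th roots bounds the left side of the theorem by $\bigl(\sum_{x}\sum_j\|(A_{s_{j+1}(x)}f)(x)-(A_{s_j(x)}f)(x)\|_\BB^p\bigr)^{1/p}$, which is exactly the quantity the $x$-dependent, window-confined form of Theorem~\ref{lpb:main:thm} bounds by $C^{1/p}\|f\|_\lpb$.

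It therefore remains to prove Theorem~\ref{lpb:main:thm} for such sequences: increasing in $j$, depending on $x$, with $s_j(x)$ confined to a fixed window for each $j$. I would rerun its proof essentially verbatim, first checking that Lemmas~\ref{lemma:a} and~\ref{lemma:b} survive the generalization — which they do, because their proofs make only pointwise estimates in $x$ (the count of $x$ for which an average $(A_{t_k}d_n)(x)$ straddles a dyadic boundary of scale $n$, the H\"older step~(\ref{eq:holder}), the telescoping $\sum_{i\in S_k}(t_i^{-1}-t_{i+1}^{-1})\le 2^{-(k-1)}$), and each remains valid with $t_k$ replaced by $t_k(x)$, using only that $t_k(x)<2^k$ (resp.\ $t_k(x),t_{k+1}(x)\in[2^{k-1},2^k]$) for every $x$; the concluding appeal to Corollary~\ref{pisier:cor} in each lemma is only to $\sum_n\|d_n f\|_\lpb^p\le C\|f\|_\lpb^p$ for the full fixed scale sequence, hence is unaffected. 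The only step that genuinely changes is the assembly of Theorem~\ref{lpb:main:thm} from the two lemmas: the ``long increment'' term $\sum_{k\in L}\|\cE_{n_{k+1}}f-\cE_{n_k}f\|_\lpb^p$ now involves $x$-dependent scales $n_k(x)$ (fixed by $s_k(x)\in[2^{n_k(x)-1},2^{n_k(x)})$), each confined to a fixed scale-window, so Corollary~\ref{pisier:cor} must be replaced there by an oscillation inequality over fixed windows.

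That oscillation inequality is the ``analogous martingale inequality'': for any fixed increasing sequence of scales $(N_j)$,
\[
\Big\|\Bigl(\sum_j\,\sup_{m,m'\in[N_j,N_{j+1}]}\|\cE_m f-\cE_{m'}f\|_\BB^p\Bigr)^{1/p}\Big\|_{\ell^p}\le C\,\|f\|_\lpb ,
\]
and, granting it, an $x$-dependent scale sequence confined to the windows $[N_j,N_{j+1}]$ is handled by bounding each $\|(\cE_{n_{j+1}(x)}f)(x)-(\cE_{n_j(x)}f)(x)\|_\BB^p$ by $2^{p-1}$ times the oscillations of $m\mapsto(\cE_m f)(x)$ over the two adjacent windows it meets, then summing (with bounded overlap). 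To prove the displayed inequality, reverse time inside each window: $(\cE_{N_{j+1}}f,\cE_{N_{j+1}-1}f,\dots,\cE_{N_j}f)$ is a finite martingale in $\lpb$ with first term $\cE_{N_{j+1}}f$, so Doob's maximal inequality, applied to the submartingale $(\|\cE_m f-\cE_{N_{j+1}}f\|_\BB)$, yields
\[
\Bigl\|\sup_{m\in[N_j,N_{j+1}]}\|\cE_m f-\cE_{N_{j+1}}f\|_\BB\Bigr\|_{\ell^p}\le\tfrac{p}{p-1}\,\|\cE_{N_j}f-\cE_{N_{j+1}}f\|_\lpb ,
\]
hence $\bigl\|\sup_{m,m'\in[N_j,N_{j+1}]}\|\cE_m f-\cE_{m'}f\|_\BB\bigr\|_{\ell^p}\le\tfrac{2p}{p-1}\|\cE_{N_j}f-\cE_{N_{j+1}}f\|_\lpb$. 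Raising to the $p$-th power, summing over $j$, and invoking the original Corollary~\ref{pisier:cor} for $(N_j)$ completes it.

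The hard part is not a single estimate but the bookkeeping in the middle step: verifying that nothing in the proofs of Lemmas~\ref{lemma:a} and~\ref{lemma:b} covertly uses $x$-independence of $(t_k)$, and, in passing between the variational and the martingale-oscillation formulations, tracking the finitely many dyadic scales a window $[t_k,t_{k+1}]$ can straddle together with the bounded multiplicity with which the resulting fixed scale-windows are used. It is also worth stressing why everything stays at the level of \emph{fixed} windows: the full $p$-variation of the reverse martingale $(\cE_m f)$ is not controlled by $\|f\|_\lpb$ at this critical exponent, and it is exactly Doob's inequality — which needs a fixed window in order to move the supremum inside — that makes the fixed-window oscillation statement of the theorem, but not a full variational statement, provable along these lines.
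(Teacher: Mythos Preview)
Your approach is correct and matches the paper's sketch: let the sequences in Lemmas~\ref{lemma:a} and~\ref{lemma:b} depend on $x$ (the ``first generalization''), and replace Corollary~\ref{pisier:cor} in the assembly step by a fixed-window martingale oscillation inequality, which the paper cites as a Banach-valued version of Theorem~6.1 of \cite{jones:et:al:98} and which you prove directly via Doob's maximal inequality applied to the submartingale $\|\cE_m f-\cE_{N_{j+1}}f\|_\BB$. The paper gives no detail beyond this outline, so your writeup is already more complete than the original.
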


\begin{theorem}
\label{lpb:stronger:thm:b}
  Let $q > p \geq 2$, and let $\BB$ be a Banach space with modulus of uniform convexity $\eta(\varepsilon) = K \varepsilon^p$. Then there is a constant $C$ depending only on $p$, $q$, and $K$ such that
\[
 \left\| \left(\sup_{(t_k)} \sum_k \| A_{t_{k+1} f} - A_{t_k} f \|_\BB^q \right)^{1/q} 
 \right\|_{\ell^p} \leq C \cdot \| f \|_\lpb, 
\]
  where the supremum ranges over all increasing sequences $(t_k)$.
\end{theorem}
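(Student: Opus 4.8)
\emph{Proof proposal.} The plan is to re-run the proof of Theorem~\ref{lpb:main:thm}, but with every estimate upgraded to its \emph{maximal}, $q$-variational form. For a sequence $(b_m)$ of elements of $\lpb$ and $x \in \ZZ$, write $V_q\bigl((b_m),x\bigr) = \sup\bigl(\sum_j \| b_{m_{j+1}}(x) - b_{m_j}(x) \|_\BB^q\bigr)^{1/q}$ for the pointwise $q$-variation, the supremum ranging over increasing $(m_j)$; for $g \in \lpb$ we also write $\|g(\cdot)\|_\BB$ for the scalar function $x \mapsto \|g(x)\|_\BB$, so that the left-hand side of the theorem is exactly $\bigl\| V_q\bigl((A_n f),\cdot\bigr) \bigr\|_{\ell^p}$. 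Three inputs are needed, each obtained by the adaptation strategy already applied to Lemmas~\ref{lemma:a} and~\ref{lemma:b}. The first is the $\lpb$-analogue of Theorem~$A'$ of \cite{jones:et:al:03} — Lemma~\ref{lemma:a} strengthened so that the cutoff $t_k$ may be chosen measurably as a function of $x$ —
\[
\sum_k \Bigl\| \sup_{t \in [2^{k-1},\,2^k)} \| A_t f(\cdot) - \cE_k f(\cdot) \|_\BB \Bigr\|_{\ell^p}^p \;\le\; C\,\|f\|_\lpb^p ;
\]
its proof is that of Lemma~\ref{lemma:a} essentially verbatim, since the pointwise kernel estimate yields a factor $(c\,2^{-|n-k|})^{1/p}$ uniform in $t \in [2^{k-1},2^k)$, so the supremum is absorbed before the Young's-inequality step. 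The second is the $\lpb$-analogue of Theorem~$B$ of \cite{jones:et:al:03}, the short-variation bound
\[
\sum_k \Bigl\| \sup_{(t_i)\subseteq [2^{k-1},\,2^k]} \Bigl( \sum_i \| A_{t_{i+1}} f(\cdot) - A_{t_i} f(\cdot) \|_\BB^q \Bigr)^{1/q} \Bigr\|_{\ell^p}^p \;\le\; C\,\|f\|_\lpb^p ,
\]
obtained by adapting the proof of Lemma~\ref{lemma:b}, now letting the sequence inside each dyadic block depend on $x$ and measuring it in $q$-variation; this is the step that genuinely uses $q>p$. The third is a vector-valued Lépingle / Pisier--Xu martingale inequality for $\lpb$: since $\BB$ has martingale cotype $p$ (the forward direction of Theorem~\ref{pisier:thm}) and $q>p$, every martingale $(M_n)$ in $\lpb$ satisfies $\bigl\| V_q\bigl((M_n),\cdot\bigr) \bigr\|_{\ell^p} \le C \sup_n \|M_n\|_\lpb$; applied to the reverse martingale $(\cE_m f)$, exactly as in Corollary~\ref{pisier:cor}, this gives $\bigl\| V_q\bigl((\cE_m f),\cdot\bigr) \bigr\|_{\ell^p} \le C\,\|f\|_\lpb$.

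Granting these, the assembly mirrors the proof of Theorem~\ref{lpb:main:thm}. For any sequence $(a_t)_{t\ge1}$ in $\lpb$ and any $x$, there is the elementary dyadic splitting
\[
V_q\bigl((a_t)_{t\ge1},x\bigr)^q \;\le\; C_q \Bigl( V_q\bigl((a_{2^{m-1}})_m,x\bigr)^q + \sum_m V_q\bigl((a_t)_{t\in[2^{m-1},\,2^m]},x\bigr)^q \Bigr),
\]
proved by sorting the consecutive pairs of any partition into those lying within one dyadic block $[2^{m-1},2^m)$ and those crossing a block boundary, and bounding each crossing pair's contribution, via two triangle inequalities and $(\alpha+\beta)^q \le 2^{q-1}(\alpha^q+\beta^q)$, by one ``long'' term (between consecutive dyadic scales $2^{m-1}$) plus two ``short'' terms (within the two blocks it meets); summing, the long terms form a sub-partition of the scale sequence and the short terms sub-partitions of individual blocks. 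Taking $a_t = A_t f$, extracting $q$-th roots, applying $\|\cdot\|_{\ell^p}$, and using subadditivity of $s \mapsto s^{1/q}$ gives
\[
\bigl\| V_q\bigl((A_n f),\cdot\bigr) \bigr\|_{\ell^p} \;\le\; C_q^{1/q} \bigl\| V_q\bigl((A_{2^{m-1}}f)_m,\cdot\bigr) \bigr\|_{\ell^p} + C_q^{1/q} \Bigl\| \bigl( \textstyle\sum_m V_q\bigl((A_t f)_{t\in[2^{m-1},\,2^m]},\cdot\bigr)^q \bigr)^{1/q} \Bigr\|_{\ell^p}.
\]

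For the second term, since $q\ge p$ one has $(\sum_m g_m^q)^{1/q}\le(\sum_m g_m^p)^{1/p}$ pointwise, so it is at most $C_q^{1/q}\bigl(\sum_m \| V_q\bigl((A_t f)_{t\in[2^{m-1},2^m]},\cdot\bigr) \|_{\ell^p}^p\bigr)^{1/p}$, which is bounded by $C\|f\|_\lpb$ by the second input. For the first term, write $A_{2^{m-1}}f = \cE_m f + R_m f$ with $R_m f := A_{2^{m-1}}f - \cE_m f$; by subadditivity of $V_q(\cdot,x)$ and the crude bound $V_q\bigl((R_m f),x\bigr) \le 2\bigl(\sum_m \|R_m f(x)\|_\BB^q\bigr)^{1/q}$,
\[
\bigl\| V_q\bigl((A_{2^{m-1}}f)_m,\cdot\bigr) \bigr\|_{\ell^p} \;\le\; \bigl\| V_q\bigl((\cE_m f),\cdot\bigr) \bigr\|_{\ell^p} + 2\,\Bigl\| \bigl( \textstyle\sum_m \| R_m f(\cdot) \|_\BB^q \bigr)^{1/q} \Bigr\|_{\ell^p},
\]
where the first summand is $\le C\|f\|_\lpb$ by the third input, and the second, using $q\ge p$ over the index $m$ once more, is at most $2\bigl(\sum_m \|A_{2^{m-1}}f - \cE_m f\|_\lpb^p\bigr)^{1/p} \le C\|f\|_\lpb$ by the first input — indeed by Lemma~\ref{lemma:a} itself, applied with $t_k = 2^{k-1} \in [2^{k-1},2^k)$. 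Collecting the estimates proves the theorem, with $C$ depending only on $p$, $q$, and $K$ (and $C_q$ on $q$).

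The routine parts are the dyadic splitting lemma and the $\ell^q \subseteq \ell^p$ bookkeeping. The main obstacle is establishing the first two inputs in full strength — the $\lpb$-versions of Theorems $A'$ and $B$ of \cite{jones:et:al:03} incorporating their first generalization (measurably $x$-dependent cutoffs). Propagating a pointwise supremum over $t$, and for the short-variation estimate a pointwise supremum over entire subsequences inside a dyadic block, through the kernel estimates is exactly where the argument of \cite{jones:et:al:03} does its real work; their combinatorial/stopping-time treatment of that supremum transfers to $\lpb$ essentially unchanged, with H\"older's inequality replacing Cauchy--Schwarz and the martingale inequalities of Theorem~\ref{pisier:thm} (and their $q$-variational refinement) replacing orthogonal decomposition in $\ell^2$. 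The one further ingredient to pin down is the vector-valued Lépingle inequality of the third input, and it is there that the hypothesis $q>p$ is indispensable: already the dyadic martingale on $\ell^p$ has unbounded uniform $p$-variation, so the strict inequality cannot be dropped.
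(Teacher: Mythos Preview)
Your proposal is correct and follows precisely the approach the paper sketches: adapt Theorems $A'$ and $B$ of \cite{jones:et:al:03} to $\lpb$ in their $x$-dependent form (the paper's ``first generalization''), replace the martingale input by the Banach-valued L\'epingle inequality of Pisier--Xu \cite{pisier:xu:88}, and assemble via the same long/short dyadic splitting used for Theorem~\ref{lpb:main:thm}. The paper does not spell out the details beyond this, but your outline matches its indicated strategy exactly, including the identification of the Pisier--Xu step as the place where $q>p$ is indispensable.
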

Note that in each case the expression in parentheses denotes a pointwise supremum, which is to say, a function of $x \in \ZZ$. Theorem~\ref{lpb:stronger:thm} is the $\lpb$ analogue of Theorem 1.8 of \cite{jones:et:al:98} and Theorem $A$ of \cite{jones:et:al:03}, and relies on a Banach-valued version of the martingale inequality given in Theorem 6.1 of \cite{jones:et:al:98}. Theorem~\ref{lpb:stronger:thm:b} is the $\lpb$ analogue of Theorem 1.10 of \cite{jones:et:al:98} and Theorem $B$ of \cite{jones:et:al:03}, and is proved using the Banach-valued version of L\'epingle's martingale inequality given in \cite{pisier:xu:88}. 

The Calder\'on transfer principle \cite{calderon:68} yields the corresponding results for $f \in L^p(X {;\,} \BB)$ and any measure-preserving transformation $T$ of $X$. Each theorem implies that the sequence of ergodic averages $(A_n f)$ converges pointwise~a.e.\ in $L^p(X {;\,} \BB)$, thereby providing strong quantitative versions of the pointwise ergodic theorem.

For the spaces $L^p$, Jones, Kaufman, Rosenblatt, and Wierdl \cite{jones:et:al:98,jones:et:al:03} obtain additional variational inequalities, including weak type (1, 1) inequalities, type $(q,q)$ inequalities for all $1<q<\infty$, and results for $L^\infty$, using variants of the Caleder\'on-Zygmund decomposition, martingale theorems, and a host of other methods. It seems that most of the arguments can be transferred to the setting of $L^p(X {;\,} \BB)$ for arbitrary $p$-uniformly convex Banach spaces $\BB$. (To that end, consider the martingale inequalities for $L^p(X {;\,} \BB)$ in \cite[Chapter 4]{pisier:11}, the analogue of the Calder\'on-Zygmund decomposition \cite[Theorem 8.13]{pisier:11}, and the analogue of the Marcinkiewicz interpolation theorem \cite[Theorem 8.51]{pisier:11}.) Pursuing this here, however, would take us too far afield.

\section{A weaker result for nonexpansive operators}
\label{old:main:section}

Fix $p \geq 2$ and a $p$-uniformly convex Banach space $\BB$ with modulus of uniform convexity $\eta(\varepsilon) = K\varepsilon^p$. In this section, we obtain a weaker result, in the case where $T$ is a nonexpansive operator that is not necessarily power bounded from below.

\newtheorem*{old:main:theorem}{Theorem~\ref{old:main:thm}}
\begin{old:main:theorem}
Suppose $T$ is a nonexpansive linear operator on $\BB$. Then for any $x$ in $\BB$ and any $\varepsilon > 0$, there are at most $C \rho^{p + 1} \log \rho$-many $\varepsilon$ fluctuations in $(A_n x)$, for a constant $C$ that depends only on $p$ and $K$.
\end{old:main:theorem}
\noindent This should be compared to the bound of $C \rho^p$ obtained from Theorem~\ref{new:main:thm}.

Let $\tilde \eta(\varepsilon) = K \varepsilon^{p-1}$. For any element $x$ of $\BB$, we follow the notation of Kohlenbach and Leu\c{s}tean \cite{kohlenbach:leustean:09} closely by writing $x_n$ for the average $A_n x$. The following lemma is implicit in that paper:

\begin{lemma}
\label{lemma:one}
For any $x \in \BB$, let
\[
M = \left\lceil \frac{16\| x \|}{\varepsilon} \right\rceil, \quad \gamma = \frac{\varepsilon}{8} \tilde \eta \left( \frac{\varepsilon}{8 \| x\| } \right).
\]
Suppose that $N$ and $u$ are such that for every $m \leq u$, $\| x_m \| \geq \| x_N \| - \gamma$. Then for every pair $i, j$ in the interval $[MN, \lfloor u / 2 \rfloor]$, we have $\|x_i - x_j\| < \varepsilon$.
\end{lemma}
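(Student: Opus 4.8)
\emph{Proof proposal.} The plan is to follow Birkhoff's proof of the mean ergodic theorem in the quantitative form isolated by Kohlenbach and Leu\c{s}tean. The hypothesis says that $\|x_N\|$ is within $\gamma$ of $\inf_{m \le u}\|x_m\|$; the strategy is to show that, near this infimum, uniform convexity forces the relevant ergodic averages to be almost $T$-invariant, and then to convert near-invariance into the bound $\|x_i - x_j\| < \varepsilon$.

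Before using convexity at all, I would record two elementary facts. The first is the doubling identity $x_{2m} = \tfrac12(x_m + T^m x_m)$, which, since $T$ is nonexpansive, gives $\|x_{2m}\| \le \|x_m\| \le \|x\|$ and $\|x_m - x_{2m}\| = \tfrac12\|x_m - T^m x_m\|$. The second is that $A_i A_n = A_n A_i$, together with a direct estimate of the boundary terms showing $\|x_i - A_i x_n\| = \|A_i A_n x - A_i x\| \le \tfrac{2n}{i}\|x\|$ whenever $n \le i$. Taking $n = N$ and $i \ge MN$, and using $M \ge 16\|x\|/\varepsilon$, this gives $\|x_i - A_i x_N\| \le \tfrac{\varepsilon}{8}$ and in particular $\|x_i\| \le \|x_N\| + \tfrac{\varepsilon}{8}$. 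So for $i,j \in [MN, \lfloor u/2\rfloor]$ it suffices to bound $\|A_i x_N - A_j x_N\|$, the difference of two ergodic averages of the single vector $x_N$.

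Now the convexity step. For $i \in [MN, \lfloor u/2\rfloor]$ we have $2i \le u$, so the hypothesis gives $\|x_{2i}\| \ge \|x_N\| - \gamma$; feeding this, the upper bound $\|x_i\| \le \|x_N\| + \tfrac{\varepsilon}{8}$, and the doubling identity into the modulus $\eta$ applied to the pair $x_i, T^i x_i$ normalized by $\|x_i\|$ produces an upper bound on $\|x_i - T^i x_i\|$, hence on $\|x_i - x_{2i}\|$. The precise value $\gamma = \tfrac{\varepsilon}{8}\tilde\eta\!\left(\tfrac{\varepsilon}{8\|x\|}\right)$ is chosen exactly so that $\gamma = \|x\|\,\eta\!\left(\tfrac{\varepsilon}{8\|x\|}\right)$, equivalently $\|x\|\,\eta^{-1}(\gamma/\|x\|) = \tfrac{\varepsilon}{8}$; since $R \mapsto R\,\eta^{-1}(\gamma/R)$ attains its maximum over $(0,\|x\|]$ at $R = \|x\|$, the norm algebra collapses and yields bounds on $\|x_i - x_{2i}\|$ of the expected size in terms of $\varepsilon$. (The case where $\|x_N\|$ is small is handled separately and trivially: then $\|x_i\| \le \|x_N\| + \varepsilon/8$ is already so small that $\|x_i - x_j\| < \varepsilon$ outright.)

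The last step is the propagation from these one-step estimates to the bound for all pairs $i,j$ in the interval, comparing $x_i$ and $x_j$ through the dyadic chains $x_i, x_{2i}, x_{4i}, \dots$ running up to the largest power-of-two multiple of $i$ that is at most $u$. I expect this to be the main obstacle: iterating an approximate fixed point produces a drift proportional to the number of iterations, so a naive chaining of the one-step estimates is hopeless, since even a single doubling can move $x_i$ by an amount of order a fractional power of $\varepsilon$ rather than $\varepsilon$ itself. The correct accounting has to exploit that along such a chain the norms $\|x_{2^\ell i}\|$ are monotone and drop by at most $\gamma + \varepsilon/8$ in total, and that $\eta^{-1}$ is concave, so that the per-step errors — each a $(1/p)$-th power of an individual norm drop — can be summed favorably rather than merely multiplied by the number of steps. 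Making these per-step errors add up to less than $\varepsilon$ is the delicate point, and is precisely what the choices $M = \lceil 16\|x\|/\varepsilon\rceil$ and $\gamma = \tfrac{\varepsilon}{8}\tilde\eta(\varepsilon/(8\|x\|))$ are tuned for; it is the place where I would follow the Kohlenbach--Leu\c{s}tean calculations most closely.
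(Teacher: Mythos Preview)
Your overall plan matches the paper's proof exactly: the paper simply cites Kohlenbach--Leu\c{s}tean and records the substitutions $h(N)\to u$, $g(MN)\to u/2-MN$, $b\to\|x\|$, so ``follow the K--L calculation'' is precisely what is required. Your Step~1 (the estimate $\|x_i-A_ix_N\|\le\varepsilon/8$ for $i\ge MN$, via $A_iA_N=A_NA_i$ and the boundary bound) is correct and is indeed the opening move.

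The gap is in your reconstruction of the convexity step and the proposed dyadic chaining. When you apply $\eta$ to the pair $(x_i,T^ix_i)$ normalized by $\|x_i\|$, the ``drop'' the modulus sees is $\|x_i\|-\|x_{2i}\|\le(\|x_N\|+\varepsilon/8)-(\|x_N\|-\gamma)=\gamma+\varepsilon/8$, not $\gamma$. Since $\gamma=\|x\|\,\eta(\varepsilon/(8\|x\|))\ll\varepsilon/8$, the $\varepsilon/8$ term dominates, and $\eta^{-1}(\varepsilon/(8\|x\|))\sim(\varepsilon/\|x\|)^{1/p}$, so your bound on $\|x_i-x_{2i}\|$ is of order $\|x\|^{1-1/p}\varepsilon^{1/p}$, not of order $\varepsilon$; the claim that ``the norm algebra collapses and yields bounds of the expected size'' is incorrect. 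The chaining then makes matters worse rather than better: the dyadic chains from $i$ and from $j$ never meet, and even along a single chain the concavity argument you sketch gives $\sum_\ell d_\ell^{1/p}\le L^{1-1/p}(\sum_\ell d_\ell)^{1/p}$, so the accumulated error is of order $L^{1-1/p}\varepsilon^{1/p}$ with $L\sim\log(u/i)$ unbounded. The choices of $M$ and $\gamma$ do not rescue this.

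The Kohlenbach--Leu\c{s}tean calculation is organized so that the single application of uniform convexity sees a drop of exactly $\gamma$ (this is why $\gamma$ is defined so that $\|x\|\,\eta^{-1}(\gamma/\|x\|)=\varepsilon/8$), with no additive $\varepsilon/8$ contamination and no telescoping over an unbounded chain; you should consult Section~4 of that paper directly rather than attempt to rebuild it from the doubling heuristic.
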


\begin{proof}
 This is exactly the calculation in Section 4 of Kohlenbach and Leu\c{s}tean \cite{kohlenbach:leustean:09}, with $h(N)$ replaced by $u$, $g(MN)$ replaced by $u/2 - MN$, and $b$ replaced by $\|x\|$. 
\end{proof}

Notice in particular that if $\| x_N \|$ is a ``global $\gamma$-minimum,'' which is to say, $\| x_m \| \geq \| x_N \| - \gamma$ for every $m$, then Lemma~\ref{lemma:one} implies $\|x_i - x_j\| < \varepsilon$ for all $i, j > MN$.

The next lemma shows that, in a certain sense, a small interval cannot contain many $\varepsilon$-fluctuations.

\begin{lemma}
\label{lemma:two}
 Fix $N \geq 1$ and real numbers $\alpha \geq 1$ and $\varepsilon > 0$. Let $x$ be any element of $\BB$ such that $\varepsilon < 2 \| x \|$. Then the number of $\varepsilon$-fluctuations between $N$ and $\alpha N$ is at most $\lfloor 4 \log \alpha \cdot \| x \| / \varepsilon \rfloor$. 
\end{lemma}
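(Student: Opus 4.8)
The plan is to exploit the elementary fact that the ergodic averages of a nonexpansive operator cannot grow in norm, and more importantly that they cannot shrink too abruptly without leaving little room for further fluctuations. The key quantitative observation is the following: since $T$ is nonexpansive, $\|A_n x\| \le \|x\|$ for all $n$, and a short computation shows $\|A_m x - A_n x\|$ can be controlled in terms of the ratio $m/n$. Concretely, for $n \le m$ one writes $A_m x = \frac{n}{m} A_n x + \frac{1}{m}\sum_{n \le i < m} T^i x$, so that $\|A_m x - \frac{n}{m} A_n x\| \le \frac{m-n}{m}\|x\| = (1 - n/m)\|x\|$, and hence $\|A_m x - A_n x\| \le (1 - n/m)(\|A_n x\| + \|x\|) \le 2(1 - n/m)\|x\|$. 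In particular, if $m/n$ is close to $1$ then $A_m x$ and $A_n x$ are close. This is the mechanism that forces fluctuations within the window $[N, \alpha N]$ to be ``spread out'' multiplicatively.

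From here I would argue as follows. Suppose there are $\varepsilon$-fluctuations witnessed by indices $N \le i_1 \le j_1 \le \dots \le i_k \le j_k \le \alpha N$ with $\|A_{j_u} x - A_{i_u} x\| \ge \varepsilon$ for each $u$. By the estimate above, $\varepsilon \le \|A_{j_u} x - A_{i_u} x\| \le 2(1 - i_u/j_u)\|x\|$, so $1 - i_u/j_u \ge \varepsilon/(2\|x\|)$, i.e. $j_u / i_u \ge (1 - \varepsilon/(2\|x\|))^{-1}$. Since the intervals $[i_u, j_u]$ are (weakly) nested in the sense that $j_u \le i_{u+1}$, the ratios multiply: $\alpha N / N \ge \prod_{u=1}^{k} (j_u/i_u) \ge (1 - \varepsilon/(2\|x\|))^{-k}$. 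Taking logarithms gives $k \le \frac{\log \alpha}{-\log(1 - \varepsilon/(2\|x\|))}$. Using the bound $-\log(1 - t) \ge t$ for $t \in (0,1)$ — valid here since $\varepsilon/(2\|x\|) < 1$ by the hypothesis $\varepsilon < 2\|x\|$ — we obtain $k \le \frac{2 \log \alpha \cdot \|x\|}{\varepsilon}$, which is within a constant factor of the claimed $\lfloor 4 \log\alpha \cdot \|x\|/\varepsilon\rfloor$; tightening the constants in the elementary estimate (or being slightly more careful with the $\|A_n x\| \le \|x\|$ bound versus $\|A_n x\|+\|x\| \le 2\|x\|$) should yield exactly the stated constant $4$.

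The step I expect to be the main obstacle — really the only nonroutine point — is pinning down the sharp multiplicative displacement estimate so that the constant comes out as $4$ rather than something larger, and making sure the ``nesting'' of the fluctuation intervals is used correctly (the witnesses satisfy $i_1 \le j_1 \le i_2 \le \dots$, so consecutive ratios indeed telescope against $\alpha = \alpha N / N$). Everything else is a direct computation with the definition of $A_n$ and the nonexpansiveness of $T$, together with the concavity inequality $\log(1+s) \le s$ applied to $s = j_u/i_u - 1$, or equivalently $-\log(1-t) \ge t$. One should also note that the hypothesis $\alpha \ge 1$ guarantees $\log\alpha \ge 0$ so the bound is meaningful, and that when $\varepsilon/(2\|x\|)$ is close to $1$ the floor makes the statement trivially true for small $k$.
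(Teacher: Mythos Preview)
Your argument is correct and is essentially the same as the paper's: both derive the estimate $\|A_m x - A_n x\| \le 2(1-n/m)\|x\|$ for $n\le m$, convert each fluctuation into a multiplicative gap $j_u/i_u \ge (1-\varepsilon/(2\|x\|))^{-1}$, telescope against $\alpha$, and take logarithms. The only difference is bookkeeping in the last step: the paper first weakens $1/(1-t)$ to $1+t$ and then uses $\log(1+t)>t/2$ to land on the constant $4$, whereas you keep $1/(1-t)$ and apply $-\log(1-t)\ge t$ directly, which already yields $k \le 2\log\alpha\cdot\|x\|/\varepsilon$. So your worry about ``tightening the constants'' is misplaced---your bound is already stronger than the stated $\lfloor 4\log\alpha\cdot\|x\|/\varepsilon\rfloor$, and since $k$ is an integer the floor follows immediately.
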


\begin{proof}
Suppose
\[
 N \leq i_1 \leq j_1 \leq i_2 \leq j_2 \leq \ldots \leq i_s \leq j_s \leq \alpha N
\]
satisfy $\|x_{j_u} - x_{i_u}\| \geq \varepsilon$ for every $u$. We need to show $s \leq 4 \log \alpha \cdot \| x \| / \varepsilon$.

A straightforward calculation (see equation (6) of \cite{birkhoff:39} or equation (11) of \cite{kohlenbach:leustean:09}) shows that for every $n, k \geq 1$, $\| x_{n+k} - x_n \| \leq 2 k \| x \| / (n + k)$. In particular, for any $j = j_u$ and $i = i_u$, we have $2(j-i)\|x\| / j \geq \varepsilon$, and so
\[
 j \geq \left(\frac{2\|x\|}{2\|x\| - \varepsilon}\right) \cdot i = \left(1 + \frac{\varepsilon}{2\|x\| - \varepsilon}\right) \cdot i > \left(1 + \frac{\varepsilon}{2\|x\|} \right) \cdot i.
\]
Since $i_1 \geq N$, we have $j_1 \geq (1 +  \varepsilon / (2\|x\|)) \cdot N$; since $i_2 \geq j_1$, we have $j_2 \geq (1 + \varepsilon / (2\|x\|))^2 \cdot N$, and so on. Thus $j_s \geq (1 +  \varepsilon / (2\|x\|))^s \cdot N$. Since $j_s \leq \alpha N$, we have
\[
 (1 +  (\varepsilon / (2\|x\|))^s \leq \alpha
\]
and hence 
\[
 s \leq \frac{\log \alpha}{\log (1 + \varepsilon / (2\|x\|))} < 4 \log \alpha \cdot \| x \| / \varepsilon,
\]
since $\log (1 + \varepsilon / (2\|x\|)) > \varepsilon / (4\|x\|)$ when $\varepsilon <2 \| x \|$.
\end{proof}

Suppose we are given $x$ in $\BB$ such that $\varepsilon < 2 \| x \|$. Consider the sequence $x_1, x_2, x_3, \ldots$ of averages. Let $M$ and $\gamma$ be as in the statement of Lemma~\ref{lemma:one}. Define a finite sequence $N_0, N_1, \ldots, N_s$ of integers by setting $N_0 = 1$ and setting $N_{i+1}$ equal to the least $m$ such that $\| x_m \| < \| x_{N_i} \| - \gamma$, if such an $m$ exists. Notice that $N_{i+1} \geq N_i$ for every $i < s$. Notice also that $s \leq \lfloor \| x \| / \gamma \rfloor$, as the norm of the averages cannot drop by $\gamma$ more than $\lfloor \| x \| / \gamma \rfloor$-many times.

The idea is this: Lemma~\ref{lemma:one} tells us that there are no $\varepsilon$-fluctuations in the intervals $[MN_0, N_1/ 2)$, $[MN_1, N_2/2)$, \ldots, $[MN_{s-1}, N_s/2)$, or beyond $MN_s$. This leaves the $\varepsilon$-fluctuations in the intervals $[1,MN_0)$ and $[N_u/2,MN_u)$ for 
$u=1,\ldots,s$, whose number we can bound using Lemma~\ref{lemma:two}; as well as at most $s$-many $\varepsilon$-fluctuations that span more than one interval. (The fact that the intervals in the first sentence may overlap or that some of the $N_u$'s may be odd does not hurt the argument below.)

More precisely, suppose $i_1 \leq j_1 \leq \ldots \leq i_k \leq j_k$ are such that for each $u=1,\ldots,k$, $\| a_{j_u} - a_{i_u}\| \geq \varepsilon$. Lemma~\ref{lemma:two} tells us that at most $\lfloor 4 \log M \cdot \| x \| / \varepsilon \rfloor$ of the pairs $(i_u,j_u)$ lie in the first interval, $[1,MN_0) = [1,M)$, and at most $\lfloor 4 \log (2 M) \cdot \| x \| / \varepsilon \rfloor$ of them lie in the remaining ones. Each of the remaining pairs has to straddle at least one of the $N_u$'s for $u=1,\ldots,s$. Thus $k$ is at most
\[
\left\lfloor 4 \log M \cdot \frac{\|x\|}{\varepsilon} \right\rfloor + \left\lfloor \frac{\| x \|}{\gamma} \right\rfloor \cdot \left\lfloor 4 \log (2M) \cdot \frac{\|x\|}{\varepsilon}\right\rfloor + \left\lfloor \frac{\| x \|}{\gamma} \right\rfloor.
\]
This provides a precise bound on the number of fluctuations, but some simplification will improve readability.

If $\| x \| / \varepsilon$ is sufficiently large, we can expand the definitions of $M$ and $\gamma$ and absorb the first and third terms and various constants into a constant multiple of the second term. More precisely, setting $\rho = \| x \| / \varepsilon$, the sequence of ergodic averages admits at most $O(\rho^2 \log \rho \cdot \tilde \eta(1/(8\rho))^{-1})$-many $\varepsilon$-fluctuations. Expanding the definition of $\tilde \eta$, we obtain a bound of $O(\rho^{p +1} \log \rho)$, completing the proof of Theorem~\ref{old:main:thm}.

\section{Quantitative convergence theorems}
\label{quantitative:section}

Let $(a_n)$ be a sequence of elements of a complete metric space. The next three statements all express the fact that $(a_n)$ is convergent:
\begin{enumerate}
 \item For every $\varepsilon > 0$, there is an $n$ such that for every $i, j \geq n$, $d(a_i,a_j) < \varepsilon$.
 \item For every $\varepsilon > 0$, there is a $k$ such that $(a_n)$ admits at most $k$ $\varepsilon$-fluctuations.
 \item For every $\varepsilon > 0$ and function $g(n)$, there is an $n$ such that for every $i, j \in [n, g(n)]$, $d(a_i,a_j) < \varepsilon$.
\end{enumerate}
Even though the statements are equivalent, the existence assertions are quite different. A bound $r(\varepsilon)$ on the value of $n$ as in (1) is called a \emph{bound on the rate of convergence} of $(a_n)$. We will call a bound $s(\varepsilon)$ on $k$ as in (2) a \emph{bound on the number of $\varepsilon$-fluctuations}, and a bound $t(\varepsilon,g)$ on $n$ as in (3) a \emph{bound on the rate of metastability}.

Notice that any bound on the rate of convergence of $(a_n)$ provides, \emph{a fortiori}, a bound on the number of $\varepsilon$-fluctuations. Moreover, if, for every $\varepsilon > 0$, $s(\varepsilon)$ is a bound on the number of $\varepsilon$-fluctuations, then for any monotone function $g$, one of the intervals
\[
[1, g(1)], [g(1), g^2(1)], \ldots, [g^{s(\varepsilon)}(1),g^{s(\varepsilon)+1}(1)]
\]
must fail to include a pair $i, j$ with $d(a_i,a_j) > \varepsilon$. Hence $t(\varepsilon,g) = g^{s(\varepsilon)}(1)$ is a bound on the rate of metastability.

On the other hand, it is well known in the general study of computability that there are computable, bounded, increasing sequences of rationals $(a_n)$ that fail to have a computable rate of convergence. (Such a sequence is called a Specker sequence; see, for example, \cite{pourel:richards:89} or the discussion in \cite[Section 5]{avigad:et:al:10}.) Clearly for such a sequence there is a computable bound on the number of fluctuations. Similarly, it is not hard to construct a computable, bounded sequence of rationals for which there is no computable bound on the number of fluctuations. (Roughly speaking, have the sequence oscillate $n$ times by some $\varepsilon_n$ whenever the $n$th Turing machine is seen to halt on empty input.) But as long as $g$ is computable, one can always compute a bound on $t(\varepsilon, g)$ by searching for a suitable interval. In a similar way, it is not hard to construct classes of sequences with a uniform bound on the number of fluctuations, but no uniform bound on the rate of convergence; and classes of sequences with a uniform bound on the rate of metastability, but no uniform bound on the number of fluctuations.\footnote{As an example of the latter, consider the countable collection of sequences where the $j$th sequence starts out with $0$'s and then oscillates $j$-times from $0$ to $1$ or back at the $j$th element. For any function $g$, if $g(1) < j$, then $[1,g(1)]$ has no oscillations; otherwise, $g(1) \geq j$ and one of the intervals $[g(1), g^2(1)], \ldots, [g^{g(1)+1}(1),g^{g(1)+2}(1)]$ has no oscillations. Thus $t(g,\varepsilon) = g^{g(1)+1}(1)$ is a uniform bound on the rate of metastability.}

Now consider the mean ergodic theorem, say, for a nonexpansive linear operator on a Hilbert space. It has long been known \cite{krengel:78} that there is no uniform bound on the rate of convergence, and it is not hard to show \cite{avigad:simic:06,avigad:et:al:10,avigad:12,vyugin:97,vyugin:98} that one cannot generally compute a bound on the rate of convergence from the given data. (Avigad, Gerhardy, and Towsner \cite{avigad:et:al:10} show, however, that in the case of a Hilbert space, one \emph{can} compute a bound on the rate of convergence of the ergodic averages, \emph{given} the norm of the limit. The considerations here show that this result extends to uniformly convex Banach spaces more generally. Specifically, for every $n, k \geq 1$ we have $\| A_{kn} f \| = \| \frac{1}{k} \sum_{i<k} T^{in} A_n \| \leq \| A_n f \|$, and hence the norm of the ergodic limit is the infimum of the norms $\| A_n f \|$. Lemma~\ref{lemma:one} above and the comment after the proof then shows that one can compute a rate of convergence by waiting until the norm of one of the averages is sufficiently close to this infimum.)

The variational inequalities in Theorems~\ref{jro:thm}, \ref{thm:jro:mean}, \ref{new:main:thm}, and \ref{old:main:thm} all yield uniform and explicit bounds on the number of $\varepsilon$-fluctuations in a sequence of ergodic averages, and hence on the rate of metastability. As such, these strengthen the results of Avigad, Gerhardy, and Towsner \cite{avigad:et:al:10} and Kohlenbach and Leu\c{s}tean \cite{kohlenbach:leustean:09}. In the case of a nonexpansive map on a uniformly convex Banach space, however, the more direct argument by Kohlenbach and Leu\c{s}tean \cite{kohlenbach:leustean:09} yields a bound on the rate of metastability that is quantitatively better, requiring only $O(\rho \log \rho \cdot \eta(1/(8\rho))^{-1})$-many iterations of a function that grows slightly faster than the argument $g$ described above.\footnote{Note that due to an error in typesetting there is an extra ``$h$'' in the statement of the main theorem in \cite{kohlenbach:leustean:09}.}

Tao \cite{tao:08} and Walsh \cite{walsh:12} use metastability to establish the norm convergence of more complex forms of ergodic averages. Kohlenbach, Leu\c{s}tean and Schade have since obtained uniform bounds on the rate of metastability in much more general settings \cite{kohlenbach:leustean:09,kohlenbach:unp:e,kohlenbach:leustean:unp,kohlenbach:schade:12}. Gerhardy and Kohlenbach \cite{gerhardy:kohlenbach:08} show that under very general conditions, having to do with derivability in a certain (strong) axiomatic theory, there are uniform and computable bounds on rates of metastability. Avigad and Iovino \cite{avigad:iovino:unp} show, again in a very general setting, that the closure of the class of structures in question under the formation of ultraproducts is enough to guarantee uniformity.

\section{Lower bounds}
\label{lower:bounds:section}

In this section, we show that the upper bound given by Theorem~\ref{new:main:thm} is sharp. In fact, we prove something stronger, namely, that the resulting bounds on the rate of metastability, and hence the bounds on the number of $\varepsilon$-fluctuations, are sharp as well.

Consider the complex numbers $\mathbb{C}$ as a Banach space over the reals, with isometry $T_\theta(z) = e^{i \theta} \cdot z$. For every $n$, $A_n 1 = (e^{i n \theta} -1) / (n (e^{i \theta} - 1))$. So, in particular, for $\theta = \pi / k$, we have
\[
 | A_k 1 | =\left| \frac{2}{k (e^{\pi i / k} - 1)} \right| \geq 2 / \pi \geq 1 / 2,
\]
since $| e^{i \theta} - 1 | \leq \theta$ for every $\theta$. Moreover, $A_{2k} 1 = 0$, which is to say, there is a $1/2$-fluctuation in the sequence $(A_n 1)$ between $k$ and $2k$.

For each $u$ and $p \geq 2$ we consider the space $\ell^p_u(\mathbb{C})$ of $u$-tuples of complex numbers with norm
\[
 \| (z_1, \ldots, z_u) \|_{p,u} = (| z_1 |^p + \ldots + | z_u |^p)^{1 / p}.
\]
Once again, we view this as a Banach space over the reals. Note that for each $i$ we have $\| (z_1, \ldots, z_u) \|_{p,u} \geq | z_i |$. Clarkson's inequalities apply equally well to the complex-valued function spaces (see e.g.~\cite{adams:fournier:03}), so for each $p \geq 2$, $\ell^p_u(\mathbb{C})$ is uniformly convex with greatest modulus of uniform convexity 
\[
 \eta(\varepsilon) = 1 - (1 - (\varepsilon / 2)^p)^{1/p} \geq \frac{1}{p} (\varepsilon / 2)^p.
\]
Let ${\bf 1} = u^{-1/p} \cdot (1, 1, \ldots, 1)$, so $\| {\bf 1} \|_{p, u} = 1$. Let ${\bf T} = (T_{\pi}, T_{\pi/2}, T_{\pi/4}, \ldots, T_{\pi/2^{u-1}})$. By the analysis above, the sequence $(A_n {\bf 1})$ of ergodic averages corresponding to ${\bf T}$ will have a $1 / (2 u^{1/p})$-fluctuation in each interval $[1,2], [2,4], \ldots, [2^{u-1},2^u]$, that is, $u$-many fluctuations in all. Setting $\varepsilon = 1 / (2 u^{1/p})$ and thinking of $u$ as a function of $\varepsilon$, we have obtained the following:

\begin{theorem}
Let $p \geq 2$, $n_p(\varepsilon) = \frac{1}{p}(\frac{\varepsilon}{2})^p$. Then for every $\varepsilon > 0$ there is a Banach space $\BB$ with modulus of uniform convexity $\eta_p$, an isometry $T$, and an $x$ in $\BB$ with $\| x \| = 1$, such that for $u = \lfloor (2 \varepsilon)^{-p} \rfloor$, the sequence $(A_n x)$ has an $\varepsilon$-fluctuation in each interval $[1,2], [2, 4], \ldots, [2^{u-1},2^u]$.
\end{theorem}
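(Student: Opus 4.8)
The plan is to supply the routine verifications for the explicit construction already described just before the statement. First I would fix $\varepsilon > 0$; one may assume $\varepsilon \leq 1/2$, since for larger $\varepsilon$ we have $u = \lfloor (2\varepsilon)^{-p} \rfloor = 0$ and the list of intervals $[1,2],\dots,[2^{u-1},2^u]$ is empty, so there is nothing to prove. Then I would set $u = \lfloor (2\varepsilon)^{-p} \rfloor \geq 1$ and take $\BB = \ell^p_u(\CC)$, regarded as a Banach space over $\RR$. Clarkson's inequalities hold verbatim for complex scalars (see \cite{adams:fournier:03}), so $\BB$ is uniformly convex with greatest modulus of uniform convexity $\eta(\varepsilon) = 1 - (1 - (\varepsilon/2)^p)^{1/p}$; the elementary estimate $1 - (1 - t)^{1/p} \geq t/p$ for $t \in [0,1]$ gives $\eta(\varepsilon) \geq \tfrac{1}{p}(\varepsilon/2)^p = \eta_p(\varepsilon)$, and since any function lying pointwise below a modulus of uniform convexity is again one, $\eta_p$ is a modulus of uniform convexity for $\BB$.

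Next I would take $x = \mathbf{1} = u^{-1/p}(1,\dots,1)$, so that $\|x\|_{p,u} = 1$, and let $T = \mathbf{T} = (T_\pi, T_{\pi/2}, T_{\pi/4}, \dots, T_{\pi/2^{u-1}})$ act coordinatewise, where $T_\theta(z) = e^{i\theta} z$. Then $T$ is an isometry of $\BB$: each $T_{\pi/2^{m-1}}$ is an isometry of $\CC$, and the $\ell^p$ norm of a tuple depends only on the moduli of its coordinates; likewise every $T^j$ is an isometry, and the $m$-th coordinate of $A_n \mathbf{1}$ equals $u^{-1/p} A_n 1$ computed with the rotation by $\theta_m = \pi/2^{m-1}$. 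Applying the single-coordinate computation recalled above with $k = 2^{m-1}$ and $\theta = \pi/k$: from $A_n 1 = (e^{in\theta} - 1)/(n(e^{i\theta} - 1))$ one gets $|A_k 1| = 2/(k\,|e^{i\pi/k} - 1|) \geq 2/\pi \geq 1/2$ using $|e^{i\theta} - 1| \leq |\theta|$, while $A_{2k} 1 = 0$.

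Finally I would combine the pieces. For each $m = 1, \dots, u$, using $\|z\|_{p,u} \geq |z_m|$,
\[
 \| A_{2^{m-1}} \mathbf{1} - A_{2^m} \mathbf{1} \|_{p,u} \;\geq\; u^{-1/p}\,|A_{2^{m-1}} 1| \;\geq\; \frac{1}{2 u^{1/p}} \;\geq\; \varepsilon,
\]
where the last inequality holds because $u \leq (2\varepsilon)^{-p}$. Hence the pair $i = 2^{m-1} \leq j = 2^m$ witnesses an $\varepsilon$-fluctuation of $(A_n \mathbf{1})$ in the interval $[2^{m-1}, 2^m]$ for every $m = 1, \dots, u$, which is exactly the claim. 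Since the construction is completely explicit, there is no real obstacle; the only points that need any attention are the legitimacy of invoking Clarkson's inequalities with complex scalars together with the numerical estimate $1 - (1-t)^{1/p} \geq t/p$, and the elementary bookkeeping around replacing the possibly non-integer quantity $(2\varepsilon)^{-p}$ by $u = \lfloor (2\varepsilon)^{-p} \rfloor$, which is precisely why one proves the inequality $1/(2u^{1/p}) \geq \varepsilon$ rather than an equality.
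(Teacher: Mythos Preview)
Your proposal is correct and follows exactly the construction given in the paper immediately before the theorem statement: the space $\ell^p_u(\CC)$, the coordinatewise rotation $\mathbf{T}$, the normalized vector $\mathbf{1}$, and the single-coordinate computation $|A_{2^{m-1}}1|\geq 1/2$, $A_{2^m}1=0$ for $\theta=\pi/2^{m-1}$. You have simply spelled out a few routine points the paper leaves implicit (the vacuous case $u=0$, the inequality $1-(1-t)^{1/p}\geq t/p$, and the floor bookkeeping giving $1/(2u^{1/p})\geq\varepsilon$).
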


This provides a lower bound of $\Omega( \rho^p)$ $\varepsilon$-fluctuations for sequences $(A_n x)$ in uniformly convex Banach spaces with modulus of uniform convexity $\eta_p$, where $\rho = \| x \| / \varepsilon$. This complements the upper bound of $O(\rho^p)$ provided by Theorem~\ref{new:main:thm}.

In particular, setting $\varepsilon = 1 / 4$, we see that there are $2^p$-many $(1/4)$-fluctuations in the sequence $(A_n {\bf 1})$ in $\ell^p_{2^p}(\mathbb{C})$. Thus we have:
\begin{corollary}
\label{lb:one}
Let $\eta_p(\varepsilon) = \frac{1}{p} (\varepsilon / 2)^p$ and suppose $t(\varepsilon, g)$ is any bound on the rate of metastability for sequences of ergodic averages $(A_n x)$ with $\| x \| \leq 1$, for Banach spaces with modulus of uniform convexity $\eta_p$. Then for $g(n) = 2^{\lceil \log_2 n \rceil + 1}$, $t(1/4, g) \geq 2^p$. Similarly, if $s(\varepsilon)$ is any bound on the number of $\varepsilon$-fluctuations in such sequences, $s(1/4) \geq 2^p$ as well.
\end{corollary}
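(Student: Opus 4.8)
The plan is to extract both inequalities directly from the theorem just proved, specialized to $\varepsilon = 1/4$. With $\varepsilon = 1/4$ the parameter becomes $u = \lfloor (2\varepsilon)^{-p}\rfloor = 2^p$, so the theorem supplies the concrete witness $\BB = \ell^p_{2^p}(\mathbb{C})$ (which has $\eta_p$ as a modulus of uniform convexity, hence is a legitimate instance of the class over which $s$ and $t$ are supposed to be bounds), the isometry $\mathbf{T}$, and the unit vector $\mathbf{1}$, for which $(A_n \mathbf{1})$ has a $(1/4)$-fluctuation in each of the $2^p$ intervals $[1,2], [2,4], \ldots, [2^{2^p - 1}, 2^{2^p}]$.

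The bound on the number of fluctuations is then immediate: lining up the endpoints of those $2^p$ intervals exhibits $2^p$ many $(1/4)$-fluctuations inside the single sequence $(A_n \mathbf{1})$, so any uniform bound $s$ must satisfy $s(1/4) \geq 2^p$.

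For the metastability bound I would first record the elementary fact that for $g(n) = 2^{\lceil \log_2 n\rceil + 1}$ and $j := \lceil \log_2 n\rceil$ one has $n \leq 2^j$ and $g(n) = 2^{j+1}$, so $[2^j, 2^{j+1}] \subseteq [n, g(n)]$. I then claim that no $n \leq 2^p - 1$ can witness metastability at level $1/4$ for $(A_n \mathbf{1})$: for such an $n$ we have $j = \lceil \log_2 n\rceil \leq p \leq 2^p - 1$, so $k := j+1$ lies in $\{1, \ldots, 2^p\}$ and the interval $[2^{k-1}, 2^k] = [2^j, 2^{j+1}]$ is one of those carrying a $(1/4)$-fluctuation; hence so does $[n, g(n)] \supseteq [2^j, 2^{j+1}]$, contradicting the requirement that every pair $i, j \in [n, g(n)]$ satisfy $\| A_i \mathbf{1} - A_j \mathbf{1}\| < 1/4$. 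Since $(A_n \mathbf{1})$ does converge — each coordinate is $(e^{in\theta} - 1)/(n(e^{i\theta} - 1)) \to 0$ because every $\theta$ used is nonzero — the definition of a rate of metastability genuinely applies to it, and we conclude that the smallest valid $n$ is at least $2^p$, i.e.\ $t(1/4, g) \geq 2^p$.

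There is no substantial obstacle; the only point needing a moment's care is the bookkeeping guaranteeing $j + 1 \leq 2^p$ for all $n \leq 2^p - 1$, so that the window $[n, g(n)]$ opened by a small $n$ really does fall inside the index range $[1, 2^{2^p}]$ where the theorem promises fluctuations. This rests on nothing more than $2^p \geq p + 1$ for $p \geq 2$.
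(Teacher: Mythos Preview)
Your argument is correct and is exactly the approach the paper takes: specialize the preceding theorem to $\varepsilon = 1/4$ so that $u = 2^p$, and read off both bounds from the $2^p$ dyadic intervals carrying $(1/4)$-fluctuations. The paper leaves the metastability half implicit; your verification that $[2^{\lceil \log_2 n\rceil}, 2^{\lceil \log_2 n\rceil + 1}] \subseteq [n, g(n)]$ always lands inside the available range of fluctuation intervals (via $p+1 \leq 2^p$) is precisely the missing bookkeeping.
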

This shows that any bound on the rate of metastability for uniformly convex Banach spaces has to depend on the modulus of uniform convexity, and similarly for the number of $\varepsilon$-fluctuations. The next counterexample shows that, without the hypothesis of uniform convexity, one can find counterexamples to the uniformity in a single space.

\begin{corollary}
There is a separable, reflexive, and strictly convex Banach space $\BB$, such that for every $u$, there is an $x \in \BB$ with $\| x \| \leq 1$ such that the sequence $(A_n x)$ has $(1/4)$-fluctuations in each of the intervals $[1,2], [2,4], \ldots, [2^{u-1}, 2^u]$.
\end{corollary}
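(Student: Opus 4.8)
The plan is to assemble the finite-dimensional examples $\ell^k_{2^k}(\CC)$ underlying Corollary~\ref{lb:one} into a single space by taking an $\ell^2$-direct sum over $k \geq 2$, and then to check that this sum is separable, reflexive, and strictly convex while still supplying, for every $u$, a norm-one vector that exhibits the required $(1/4)$-fluctuations \emph{inside a single summand}.

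First I would set
\[
 \BB = \Big(\bigoplus_{k \geq 2} \ell^k_{2^k}(\CC)\Big)_{\ell^2},
\]
the space of sequences $x = (x_k)_{k \geq 2}$ with $x_k \in \ell^k_{2^k}(\CC)$ and $\|x\|_\BB = \big(\sum_{k \geq 2} \|x_k\|_{k,2^k}^2\big)^{1/2} < \infty$. Since each summand is finite-dimensional, countably many summands make $\BB$ separable; and an $\ell^2$-sum of reflexive spaces is reflexive. For strict convexity I would run the standard argument: if $\|x\|_\BB = \|y\|_\BB = 1$ and $\big\|\tfrac{x+y}{2}\big\|_\BB = 1$, then applying strict convexity of the scalar space $\ell^2$ to the nonnegative vectors $(\|x_k\|_{k,2^k})_k$ and $(\|y_k\|_{k,2^k})_k$ forces $\|x_k\|_{k,2^k} = \|y_k\|_{k,2^k}$ and $\big\|\tfrac{x_k+y_k}{2}\big\|_{k,2^k} = \|x_k\|_{k,2^k}$ for every $k$, whence the uniform (hence strict) convexity of each $\ell^k_{2^k}(\CC)$ gives $x_k = y_k$, i.e.\ $x = y$. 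Note that $\BB$ is \emph{not} uniformly convex, because the modulus of convexity of its $k$-th summand degenerates as $k \to \infty$; this is precisely why uniformity as in Corollary~\ref{lb:one} can fail inside a single space.

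Next I would put on $\BB$ the isometry $T = \bigoplus_{k \geq 2} \mathbf{T}^{(k)}$, where $\mathbf{T}^{(k)} = (T_\pi, T_{\pi/2}, \ldots, T_{\pi/2^{2^k-1}})$ is the isometry of $\ell^k_{2^k}(\CC)$ used in the construction of Corollary~\ref{lb:one}; an $\ell^2$-sum of isometries is again an isometry. Given $u$, choose $k \geq 2$ with $2^k \geq u$, and let $x \in \BB$ be the vector whose $k$-th coordinate equals the norm-one element $\mathbf{1}^{(k)} = (2^k)^{-1/k}(1,\ldots,1) \in \ell^k_{2^k}(\CC)$ and all of whose other coordinates vanish, so $\|x\|_\BB = \|\mathbf{1}^{(k)}\|_{k,2^k} = 1$. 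Since $T$ acts coordinatewise, $A_n x$ is supported in the $k$-th coordinate with $(A_n x)_k = A_n \mathbf{1}^{(k)}$, and a vector of $\BB$ supported in one coordinate has $\BB$-norm equal to the norm of that coordinate; hence $\|A_m x - A_n x\|_\BB = \|A_m \mathbf{1}^{(k)} - A_n \mathbf{1}^{(k)}\|_{k,2^k}$ for all $m, n$. By the analysis leading to Corollary~\ref{lb:one}, the sequence $(A_n \mathbf{1}^{(k)})$ has a $(1/4)$-fluctuation in each of the intervals $[1,2], [2,4], \ldots, [2^{2^k-1}, 2^{2^k}]$, hence in particular in the first $u$ of them, namely $[1,2], [2,4], \ldots, [2^{u-1}, 2^u]$. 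Transporting these fluctuations to $\BB$ through the norm identity above finishes the proof.

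The one step that needs genuine care is the verification of strict convexity of the $\ell^2$-sum in spite of the degeneration of the moduli of its summands; the remaining points (separability, reflexivity, and the coordinatewise computation of the ergodic averages) are routine, so I do not expect a real obstacle here.
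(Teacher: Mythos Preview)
Your proposal is correct and follows essentially the same construction as the paper: both take the $\ell^2$-direct sum of the finite-dimensional examples from Corollary~\ref{lb:one} and use Day's observation that such a sum is reflexive and strictly convex. You are simply more explicit than the paper, spelling out the strict-convexity argument and assembling a single isometry $T$ on the whole sum, whereas the paper cites Day~\cite{day:41} and leaves the operator implicit via the embedding of each summand.
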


\begin{proof}
For each $u$, let $\BB_u$ be any separable, uniformly convex Banach space containing an $x$ with the requisite properties. As in Day \cite{day:41}, define $\BB$ to be the space of sequences $b = (b_u)$ such that each $b_u$ is in $\BB_u$ and $\| b \| = (\sum_u \| b_u \|^2_u)^{1/2} < \infty$. Then $\BB$ embeds each $\BB_u$, and Day \cite{day:41} shows that $\BB$ is reflexive and strictly convex.
\end{proof}

We also provide a partial converse to Theorem~\ref{new:main:thm}.  A similar analysis can be carried out for $\varepsilon$-fluctuations and metastability.

\begin{theorem}
\label{partial:converse:thm}
Consider the following conditions on a Banach space $\BB$ for some $p \geq 2$.
\begin{enumerate}  
\item $\BB$ is isomorphic to a $p$-uniformly convex space.
\item For every $\lambda \geq 1$ there is a constant $C_\lambda$, such that the following holds.  Assume $\mathbb{X}$ is a closed subspace of $\BB$ and that $T$ is a linear operator on $\mathbb{X}$ satisfying $\lambda^{-1} \|y\|_\BB \leq \|T^n y\|_\BB \leq \lambda \|y\|_\BB$ for every $y$ in $\mathbb{X}$ and every $n$.  Then for all $x$ in $\mathbb{X}$, 
\[ \sum_k \|A_{2^{k+1}} x - A_{2^k} x\|_\BB^p \leq C_\lambda \| x \|_\BB^p.\]
\item $\BB$ has type $r > 1$ and cotype $p$  (see \cite{maurey:03, pisier:86} for definitions).
\end{enumerate}
In general, (1) implies (2) implies (3).  Further, if $\BB$ is a UMD space or a Banach lattice, then the three conditions are equivalent.
\end{theorem}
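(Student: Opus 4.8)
The plan is to prove $(1)\Rightarrow(2)\Rightarrow(3)$ in general, and then $(3)\Rightarrow(1)$ under the UMD or lattice hypothesis. The implication $(1)\Rightarrow(2)$ is essentially a transfer of Theorem~\ref{new:main:thm}: a closed subspace $\mathbb X\subseteq\BB$ is again isomorphic to a $p$-uniformly convex space, with isomorphism constant $M$ and modulus constant $K$ inherited from $\BB$ and hence uniform in $\mathbb X$; if $T$ satisfies $\lambda^{-1}\|y\|\le\|T^{n}y\|\le\lambda\|y\|$ on $\mathbb X$ and $\phi\colon\mathbb X\to\mathbb X'$ is an isomorphism onto a $p$-uniformly convex space with $\|\phi\|\,\|\phi^{-1}\|\le M$, then $T'=\phi T\phi^{-1}$ obeys the same bound with $\lambda$ replaced by $M\lambda$, and $\phi\circ A_{n}(T)=A_{n}(T')\circ\phi$ because each $A_n$ is a polynomial in $T$. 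Applying Theorem~\ref{new:main:thm} to $T'$ with $t_k=2^k$ and pulling the inequality back through $\phi$ yields (2), with a constant depending only on $p$, $K$, $M$, $\lambda$.

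For $(2)\Rightarrow(3)$ I would derive non-trivial type and cotype $p$ separately, in each case by placing, inside finite-dimensional subspaces of $\BB$ supplied by the Maurey--Pisier theory, an operator that is uniformly power bounded from above \emph{and} below but whose ergodic averages have many large, well-separated jumps; (2) then forces a finite-dimensional inequality that fails for large dimension. For type: if $\BB$ has no type $r>1$ it is not $B$-convex, hence contains $(1+\delta)$-isomorphic copies of $\ell^1_n$ for every $n$; on such a copy the cyclic shift is a $(1+\delta)$-isometry, so it is $\lambda$-power bounded with $\lambda=(1+\delta)^2$ independent of $n$, yet in $\ell^1_n$ one computes $\|A_{2^{k+1}}e_0-A_{2^k}e_0\|_1=1$ for every $k$ with $2^{k+1}\le n$, so the variational sum in (2) is of order $\log n$ while $\|e_0\|\approx 1$ — a contradiction as $n\to\infty$. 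Having non-trivial type, $\BB$ has finite cotype index $q_\BB$; if $q_\BB>p$ (so cotype $p$ fails with positive margin), Maurey--Pisier gives $(1+\delta)$-copies of $\ell^{q_\BB}_{2d}$ in $\BB$ for every $d$, on which we let $T$ act as the planar rotation through $\pi/2^{j}$ on the $j$-th coordinate pair ($1\le j\le d$). Rotation inside a coordinate plane alters the $\ell^{q_\BB}$-norm of that plane's component by a factor in $[2^{1/q_\BB-1/2},\,2^{1/2-1/q_\BB}]$, so $T$ is $\lambda$-power bounded with $\lambda$ independent of $d$. Taking $x$ to be the sum of the first basis vectors of the $d$ planes, the rotation computation of Section~\ref{lower:bounds:section} shows that the $j$-th plane component of $A_{2^{j}}x$ has norm at least an absolute constant $c$ while that of $A_{2^{j+1}}x$ vanishes; since a plane component is dominated by the whole norm, $\|A_{2^{j+1}}x-A_{2^{j}}x\|_\BB\ge c'$, whence the variational sum is of order $d$, while $\|x\|_\BB\approx d^{1/q_\BB}$; (2) then forces $d\lesssim d^{p/q_\BB}$, impossible. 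Hence $q_\BB\le p$. When $\BB$ is a Banach lattice one may replace the coordinate planes by spans of suitably chosen pairs of disjoint norm-one vectors and run the same computation, reading off directly a uniform lower $p$-estimate, which for $p\ge 2$ is equivalent to genuine cotype $p$.

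For $(3)\Rightarrow(1)$: if $\BB$ is a Banach lattice, (3) makes it $s$-convex for some $s>1$ and of cotype $p$, and the structure theory of Banach lattices (lower/upper estimates together with the attendant renorming theorems) then produces an equivalent $p$-uniformly convex norm. If $\BB$ is UMD it is automatically super-reflexive, so it already has non-trivial type and (3) reduces to cotype $p$; here one invokes the known fact — the reason the UMD hypothesis appears — that in a UMD space Rademacher cotype $p$ forces martingale cotype $p$, which by Theorem~\ref{pisier:thm} is equivalent to (1).

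The crux is the cotype half of $(2)\Rightarrow(3)$: one must realize, inside a subspace witnessing a cotype defect, a linear operator that is power bounded from \emph{both} sides while having strongly oscillatory ergodic averages, and planar rotations are essentially the only available device. They detect a cotype defect only when it has strictly positive margin ($q_\BB>p$); the borderline exponent $q_\BB=p$ — equivalently, upgrading ``cotype $q$ for all $q>p$'' to cotype $p$ — is the delicate point, resolved for lattices via the lower-$p$-estimate argument and, in the UMD case, folded into the passage from Rademacher to martingale cotype. Putting those last two inputs into usable form is the other non-routine part of the argument.
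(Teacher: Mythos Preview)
Your overall strategy matches the paper's: $(1)\Rightarrow(2)$ via Theorem~\ref{new:main:thm} restricted to subspaces, $(2)\Rightarrow(3)$ by planting shift/rotation operators inside $\ell^1_n$- and $\ell^q_n$-copies supplied by Maurey--Pisier, and $(3)\Rightarrow(1)$ by citing the known equivalences for UMD spaces and Banach lattices. Two differences are worth noting.

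First, and most importantly, your treatment of the cotype half of $(2)\Rightarrow(3)$ diverges from the paper's and leaves a gap in the general implication. The paper simply invokes (citing Maurey) that if $\BB$ fails cotype $p$ then it also fails cotype $q$ for some $q>p$ --- ``each space has a minimal cotype'' --- so one is automatically in your ``positive margin'' regime $q_\BB>p$ and the rotation argument on $\ell^q_u(\CC)$ goes through directly. You instead treat the borderline $q_\BB=p$ as a genuinely separate case to be closed only under the lattice or UMD hypothesis; but the theorem asserts $(2)\Rightarrow(3)$ for \emph{all} Banach spaces, so your argument as written does not deliver cotype $p$ in general, only cotype $q$ for every $q>p$. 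The paper's one-line reduction via the minimal-cotype fact avoids this detour entirely, and your additional lattice-specific ``lower $p$-estimate'' argument is not needed.

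Second, a minor technical point: the paper works with the complex space $\ell^q_u(\CC)$, on which the coordinatewise rotations are genuine isometries, and then observes that $\ell^q_u(\CC)$ is $\sqrt 2$-isomorphic to the real space $\ell^q_{2u}$, which in turn embeds $(1+\varepsilon)$-isomorphically into $\BB$ by Maurey--Pisier--Krivine. Your route --- planar rotations on real $\ell^{q_\BB}_{2d}$, which are only power bounded with constant $2^{|1/2-1/q_\BB|}$ --- is equivalent but slightly less clean.
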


\begin{proof} The first implication follows from Theorem~\ref{new:main:thm}, since a modulus of uniform convexity remains valid on any subspace.  For the second implication, first assume that $\BB$ is not of cotype $p$ for some $p\geq2$ (and therefore infinite dimensional).  Then $\BB$ is also not of cotype $q$ for some $q>p$ since each space has a minimal cotype \cite{maurey:03}.  

Consider the isometry $\mathbf{T}$ and point $x=\mathbf{1}$ on $\ell^q_u(\CC)$ given above with $\varepsilon = 1 / (2 u^{1/q})$. By the same analysis, for each $k < u$, $\|A_{2^{k+1}} x - A_{2^k} x\|_{q,u} \geq \varepsilon$.  It follows that
\[ \sum_k \|A_{2^{k+1}} x - A_{2^k} x\|_{q,u}^p \geq u \cdot \varepsilon^p = \frac{u^{1-p/q}}{2^p}\] 
which is unbounded in $u$ since $p/q < 1$.  Therefore, it is enough to find $\lambda$-isomorphic copies of $\ell^q_u(\CC)$ in $\BB$ for all $u$.

To do this, first note that $\ell^q_u(\CC)$ is isomorphic to $\ell^q_{2u}$.  Indeed, for $a,b$ in $\RR$, 
\[(a^q + b^q)^{1/q} \leq (a^2 + b^2)^{1/2}  \leq \sqrt{2} \max(|a|,|b|) \leq \sqrt{2} (a^q + b^q)^{1/q},\]
and in the same way, for $z_j = a_j +i \cdot b_j$,
\[ \|(z_1, \ldots, z_u)\|_{\ell^q_u(\CC)} \leq \sqrt{2} \|(a_1, b_1, \ldots, a_u, b_u)\|_{\ell^q_{2u}} \leq \sqrt{2} \|(z_1, \ldots, z_u)\|_{\ell^q_u(\CC)}.\]

Second, since $\BB$ is not of type $q$, it follows from a result of Maurey, Pisier and Krivine \cite{maurey:03} that for any $\varepsilon >0$, $\BB$ contains $(1+\varepsilon)$-isomorphic copies of $\ell^q_u$. This means that there are $x_1, \ldots, x_n$ in $\BB$ such that for any $a_1, \ldots, a_u$ in $\RR$,
\[ \|(a_1, \ldots, a_u)\|_{\ell^q_u} \leq \|a_1 x_1 + \cdots + a_u x_u\|_\BB \leq (1+\varepsilon) \|(a_1, \ldots, a_u)\|_{\ell^q_u}.\]

Fix $\lambda > \sqrt{2}$. Then for any $u$, we can find a subspace $\mathbb{X}$ of $\BB$ $\lambda$-isomorphic to $\ell^q_u(\CC)$.  As above, there are an $x$ in $\mathbb{X}$ with $\|x\| \leq \lambda$ and a linear operator $T$ on $\mathbb{X}$ satisfying $\lambda^{-1} \|y\|_\BB \leq \|T^n y\|_\BB \leq \lambda \|y\|_\BB$ for all $y$ in $\mathbb{X}$ and all $n$, such that  
\[ \sum_k \|A_{2^{k+1}} x - A_{2^k} x\|_\BB^p \geq \frac{u^{1-p/q}}{\lambda \cdot 2^p}.\]
Therefore there can be no such $C_\lambda$.

Now assume $\BB$ has type $r = 1$.  By the same result of Maurey, Pisier and Krivine \cite{maurey:03}, for any $\varepsilon >0$, $\BB$ contains $(1+\varepsilon)$-isomorphic copies of $\ell^1_u$.  As before it is enough to find some $x$ of unit norm and isometry $T$ on $\ell^1_u$ where the variation sum with exponent $p$ is unbounded in $u$.

Consider the right shift isometry $T$ (with wrapping) on $\ell^1_u$ and the point $x = (1, 0,\ldots, 0)$.  Then $A_2 x = (\frac{1}{2}, \frac{1}{2}, 0, \ldots, 0)$, $A_3 x = (\frac{1}{3}, \frac{1}{3}, \frac{1}{3}, 0 \ldots, 0)$, and so on.  Notice that, $A_4 - A_2 = (-\frac{1}{4}, -\frac{1}{4}, \frac{1}{4}, \frac{1}{4}, 0, \ldots, 0)$.  In the same way, $\|A_{2^{k+1}} x - A_{2^k} x\|_{1,u} = 1$ for all $k < \lfloor \log_2 u \rfloor$.  Hence
\[ \sum_k \|A_{2^{k+1}} x - A_{2^k} x\|_{1,u}^p \geq \lfloor \log_2 u \rfloor\]
which is clearly unbounded in $u$. This completes the the second implication.

Finally, if $\BB$ is a UMD space, then $\BB$ is of cotype $p$ if and only if $\BB$ is isomorphic to a $p$-uniformly convex space \cite{pisier:86}.   The same holds of Banach lattices of type $r > 1$  \cite[ch.~1-f]{lindenstrauss:79}.  This proves the equivalence.  
\end{proof}

\section{Comments and questions}
\label{upcrossing:section}

We have observed that saying that a sequence of elements of a complete metric space converges is equivalent to saying that, for every $\varepsilon > 0$, the sequence admits only finitely many $\varepsilon$-fluctuations. Similarly, if $(f_n)$ is a sequence of measurable functions from a measure space $\mdl X = (X, \BB, \mu)$ to some metric space, saying that $(f_n)$ converges pointwise a.e.~is equivalent to saying that for every $\varepsilon > 0$, the measure of the set
\[
 \{ x \in X \st \mbox{$(f_n(x))_{n \in \NN}$ admits $k$ $\varepsilon$-fluctuations} \}
\]
approaches $0$ as $k$ approaches infinity. Such results can often be obtained from upcrossing inequalities in the style of Doob's upcrossing inequality for the martingale convergence theorem \cite{doob:53} and Bishop's upcrossing inequalities for the pointwise ergodic theorem and Lebesgue's theorem \cite{bishop:66,bishop:67,bishop:67b}. Other upcrossing inequalities and oscillation inequalities have been obtained in the measure-theoretic setting \cite{ivanov:96,kachurovskii:96,jones:et:al:98, kalikow:weiss:99,jones:et:al:03, hochman:09, oberlin:et:al:12}. 

Theorems~\ref{new:main:thm} and \ref{old:main:thm} show that the uniform bound on the oscillations of a sequence of ergodic averages is a geometric rather than a metric phenomenon. To summarize the state of affairs:
\begin{itemize}
 \item For nonexpansive operators on a Hilbert space, and for linear operators on a uniformly convex Banach space that are power bounded from above and below, variational inequalities yield uniform and explicit bounds on the number of $\varepsilon$-fluctuations in a sequence of ergodic averages (Theorems~\ref{jro:thm} and \ref{new:main:thm}).
 \item For nonexapansive maps on uniformly convex Banach spaces, there are again uniform and explicit bounds on the number of $\varepsilon$-fluctuations (Theorem~\ref{old:main:thm}).
\end{itemize}
The following questions remain:
\begin{itemize}
 \item Can one extend our main result, Theorem~\ref{new:main:thm}, to arbitrary power bounded operators, or even nonexpansive operators?
 \item Can one improve the bound in Theorem~\ref{old:main:thm}?
 \item Can one prove variational inequalities, or obtain uniform bounds on the number of $\varepsilon$-fluctuations, for the sequences of multiple ergodic averages in Tao's and Walsh's theorems \cite{tao:08, walsh:12}? (See \cite{demeter:07,do:et:al:unp} for variational inequalities involving certain kinds of bilinear ergodic averages.)
 \item Does Theorem~\ref{new:main:thm} characterize spaces isomorphic to p-uniformly convex spaces, as does the martingale property in Theorem~\ref{pisier:thm}?
\end{itemize}
Finally, it is worth noting that Kohlenbach's ``proof mining'' program \cite{kohlenbach:08,gerhardy:kohlenbach:08} provides general logical methods for extracting bounds on the rate of metastability, and both those methods and the ultraproduct methods of \cite{avigad:iovino:unp} provide general conditions that guarantee that such bounds are uniform. The methods of \cite{kohlenbach:08,gerhardy:kohlenbach:08} moreover guarantee that the bounds are computable from the relevant data. Along these lines, it would be nice to have a better general understanding as to when (and how) uniform and computable bounds on the number of fluctuations can be obtained from a nonconstructive convergence theorem. (Safarik and Kohlenbach \cite{safarik:kohlenbach:unp} provide some initial results in that direction.)


\begin{thebibliography}{10}

\bibitem{adams:fournier:03}
Robert A.~Adams and John J.~F.~Fournier.
\newblock {\em Sobolev spaces.}
\newblock Elsevier/Academic Press, Amsterdam, second edition, 2003.

\bibitem{avigad:12}
Jeremy Avigad.
\newblock Uncomputably noisy ergodic limits.
\newblock {Notre Dame J. Formal Logic}, 53:347--350, 2012.

\bibitem{avigad:iovino:unp}
Jeremy Avigad and Jos\'e Iovino.
\newblock Ultraproducts and metastability.
\newblock arXiv:1301.3063.

\bibitem{avigad:et:al:10}
Jeremy Avigad, Philipp Gerhardy, and Henry Towsner.
\newblock Local stability of ergodic averages.
\newblock {\em Trans. Amer. Math. Soc.}, 362(1):261--288, 2010.

\bibitem{avigad:simic:06}
Jeremy Avigad and Ksenija Simic.
\newblock {Fundamental notions of analysis in subsystems of second-order
  arithmetic}.
\newblock {\em Ann. Pure Appl. Logic}, 139(1-3):138--184, 2006.

\bibitem{birkhoff:39}
Garrett Birkhoff.
\newblock The mean ergodic theorem.
\newblock {\em Duke Math. J.}, 5(1):19--20, 1939.

\bibitem{bishop:66}
Errett Bishop.
\newblock {An upcrossing inequality with applications}.
\newblock {\em Michigan Math. J.}, 13:1--13, 1966.

\bibitem{bishop:67}
Errett Bishop.
\newblock {\em {Foundations of Constructive Analysis}}.
\newblock McGraw-Hill, New York, 1967.

\bibitem{bishop:67b}
Errett Bishop.
\newblock {A constructive ergodic theorem}.
\newblock {\em J. Math. Mech.}, 17:631--639, 1967/1968.

\bibitem{calderon:68}
A.~P.~Calder\'on.
\newblock Ergodic theory and translation-invariant operators.
\newblock {\em Proc.\ Nat.\ Acad.\ Sci.\ U.S.A.}, 59:349--353, 1968.

\bibitem{carothers:05}
N.~L.~Carothers.
\newblock {A short introduction to Banach space theory}.
\newblock Cambridge University Press, Cambridge, 2005.

\bibitem{day:41}
Mahlon M. Day.
\newblock {Reflexive Banach spaces not isomorphic to uniformly convex spaces}.
\newblock {\em Bull. Amer. Math. Soc.}, 47(4):313--317, 1941.

\bibitem{demeter:07}
Ciprian Demeter.
\newblock Pointwise convergence of the ergodic bilinear Hilbert transform.
\newblock {\em Illinois J. Math.}, 51(4):1123--1158, 2007.
 
\bibitem{doob:53}
J.~L. Doob.
\newblock {\em Stochastic processes}.
\newblock John Wiley \& Sons Inc., New York, 1953.

\bibitem{do:et:al:unp}
Y.~Do, R.~Oberlin, and E.~A. Palsson.
\newblock Variational bounds for a dyadic model of the bilinear Hilbert transform.
\newblock To appear in the \emph{\em Illinois J. Math.}, arXiv:1203.5135.

\bibitem{gerhardy:kohlenbach:08}
Philipp Gerhardy and Ulrich Kohlenbach.
\newblock General logical metatheorems for functional analysis.
\newblock {\em Trans. Amer. Math. Soc.},
  360(5):2615--2660, 2008.

\bibitem{hochman:09}
Michael Hochman.
\newblock Upcrossing inequalities for stationary sequences and applications.
\newblock {\em Ann. Probability}, 37(6):2135--2149, 2009.

\bibitem{ivanov:96}
V.~V. Ivanov.
\newblock {Oscillations of averages in the ergodic theorem}.
\newblock {\em Dokl. Akad. Nauk}, 347(6):736--738, 1996.

\bibitem{jones:et:al:96}
Roger~L. Jones, Iosif V.~Ostrovskii, and Joseph~M. Rosenblatt.
\newblock Square functions in ergodic theory.
\newblock {\em Ergodic Theory Dynam. Systems}, 16(2):267--305.

\bibitem{jones:et:al:98}
Roger~L. Jones, Robert Kaufman, Joseph~M. Rosenblatt, and M\'{a}t\'{e} Wierdl.
\newblock {Oscillation in ergodic theory}.
\newblock {\em Ergodic Theory Dynam. Systems}, 18(4):889--935, 1998.

\bibitem{jones:et:al:03}
Roger~L. Jones, Joseph~M. Rosenblatt, and M\'{a}t\'{e} Wierdl.
\newblock {Oscillation in ergodic theory: higher dimensional results}.
\newblock {\em Israel. J. Math.}, 135:1--27, 2003.

\bibitem{kachurovskii:96}
A.~G. Kachurovski{\u\i}.
\newblock {Rates of convergence in ergodic theorems}.
\newblock {\em Uspekhi Mat. Nauk}, 51(4(310)):73--124, 1996.
\newblock Translation in \emph{Russian Math. Surveys} 51 (1996), no. 4, 653--703.

\bibitem{kalikow:weiss:99}
Steven Kalikow and Benjamin Weiss.
\newblock {Fluctuations of ergodic averages}.
\newblock {\em Illinois J. Math.}, 43(3):480--488, 1999.

\bibitem{kohlenbach:08}
Ulrich~Kohlenbach.
\newblock {\em {Applied proof theory: proof interpretations and their use in
  mathematics}}.
\newblock Springer, Berlin, 2008.

\bibitem{kohlenbach:unp:e}
Ulrich Kohlenbach.
\newblock {A uniform quantitative form of sequential weak compactness and
  Baillon's nonlinear ergodic theorem}.
\newblock {\em Communications in Contemporary Mathematics}, 14, 20 pages, 2012.

\bibitem{kohlenbach:leustean:unp}
Ulrich Kohlenbach and Laurentiu Leu\c{s}tean.
\newblock Effective metastability of {H}alpern iterates in {CAT(0)} spaces. 
\newblock {\em Advances in Math.}, 231:2526--2556, 2012.

\bibitem{kohlenbach:leustean:09}
Ulrich Kohlenbach and Laurentiu Leu\c{s}tean.
\newblock A quantitative mean ergodic theorem for uniformly convex {B}anach
  spaces.
\newblock {\em Ergodic Theory Dynam. Systems}, 29(6):1907--1915, 2009.
\newblock Erratum: \emph{Ergodic Theory Dynam. Systems} 29:1995, 2009.

\bibitem{safarik:kohlenbach:unp}
Ulrich Kohlenbach and Pavol Safarik.
\newblock Fluctuations, effective learnability and metastability in analysis.
\newblock {\em Ann. Pure Appl. Logic}, to appear.

\bibitem{krengel:78}
Ulrich Krengel.
\newblock {On the speed of convergence in the ergodic theorem}.
\newblock {\em Monatsh. Math.}, 86(1):3--6, 1978/79.

\bibitem{krengel:85}
Ulrich Krengel.
\newblock {\em {Ergodic theorems}}.
\newblock Walter de Gruyter \& Co., Berlin, 1985.

\bibitem{lindenstrauss:79}
Joram Lindenstrauss and Lior Tzafriri.
\newblock {\em Classical Banach spaces II: function spaces}.
\newblock Springer, Berlin, 1979.

\bibitem{maurey:03}
Bernard Maurey.
\newblock Type, cotype and $K$-convexity.
\newblock In {\em Handbook of the geometry of Banach spaces}, Vol. 2, North-Holland, Amsterdam, 2003,
1299--1332.

\bibitem{oberlin:et:al:12}
Richard Oberlin, Andreas Seeger, Terence Tao, Christoph Thiele, and James Wright.
\newblock A variation norm Carleson theorem.
\newblock {\em J. Eur. Math. Soc.} 14:421--464, 2012.

\bibitem{pisier:75}
Gilles Pisier.
\newblock Martingales with values in uniformly convex spaces.
\newblock {\em Israel. J. Math.}, 20(3-4):326--350, 1975.

\bibitem{pisier:86}
Gilles Pisier.
\newblock Probabilistic methods in the geometry of Banach spaces. 
\newblock In {\em Probability and analysis (Varenna, 1985)}, Springer, Berlin, 1986, 167--241.

\bibitem{pisier:11}
Gilles Pisier.
\newblock Martingales in {B}anach spaces (in connection with {T}ype and
  {C}otype). {C}ourse {IHP}, {F}eb.~2--8, 2011.
\newblock Manuscript, \url{http://www.math.jussieu.fr/~pisier/ihp-pisier.pdf}.

\bibitem{pisier:xu:88}
Gilles Pisier and Quanhua Xu.
\newblock The strong $p$-variation of martingales and orthogonal series.
\newblock {\em Prob.~Th.~Rel.~Fields} 77:497--514, 1988.

\bibitem{pourel:richards:89}
Marian~B. Pour-El and J.~Ian Richards.
\newblock {\em {Computability in analysis and physics}}.
\newblock Springer, Berlin, 1989.

\bibitem{nagy:et:al:10}
B\'ela Sz.-Nagy, Hari Bercovici, Ciprian Foias, L\'aszl\'o K\'erchy.
\newblock {\em Harmonic analysis of operators on Hilbert space}, second edition.
\newblock Springer, New York, 2010.

\bibitem{kohlenbach:schade:12}
Katharina Schade and Ulrich Kohlenbach.
\newblock Effective metastability for modified halpern iterations in CAT(0)
  spaces.
\newblock {\em Fixed Point Theory and Applications}, 2012:191, 19 pages, 2012.

\bibitem{tao:08}
Terence Tao.
\newblock Norm convergence of multiple ergodic averages for commuting
  transformations.
\newblock {\em Ergodic Theory Dynam. Systems}, 28(2):657--688, 2008.

\bibitem{vyugin:97}
V.~V. V'yugin.
\newblock Ergodic convergence in probability, and an ergodic theorem for
  individual random sequences.
\newblock {\em Teor. Veroyatnost. i Primenen.}, 42(1):35--50, 1997.

\bibitem{vyugin:98}
V.~V. V'yugin.
\newblock Ergodic theorems for individual random sequences.
\newblock {\em Theoret. Comput. Sci.}, 207(2):343--361, 1998.
\bibitem{walsh:12}
Miguel Walsh.
\newblock Norm convergence of nilpotent ergodic averages.
\newblock {\em Annals of Mathematics}, 175(3):1667--1688, 2012.
 
\end{thebibliography}

\end{document}